\theoremstyle{plain}
\newtheorem{theorem}{Theorem}
\newtheorem{lemma}[theorem]{Lemma}
\newtheorem{proposition}[theorem]{Proposition}
\theoremstyle{definition}
\newtheorem{remark}[theorem]{Remark}
\newcommand{\pa}{\partial}
\newcommand{\al}{\alpha}
\newcommand{\be}{\beta}
\newcommand{\e}{\epsilon}
\newcommand{\om}{\omega}
\newcommand{\Om}{\Omega}
\newcommand{\bB}{\mathbb{B}}
\newcommand{\bC}{\mathbb{C}}
\newcommand{\bN}{\mathbb{N}}
\newcommand{\bR}{\mathbb{R}}
\newcommand{\cA}{\mathcal{A}}
\newcommand{\cH}{\mathcal{H}}
\newcommand{\cL}{\mathcal{L}}
\newcommand{\ra}{\rightarrow}
\newcommand{\sub}{\subseteq}
\begin{document}
\title{$p$-Summable commutators in Bergman spaces on egg domains}

\author{Mohammad Jabbari}
\address{Mohammad Jabbari, Centro de Investigacion en Matematicas, A.P. 402, Guanajuato, Gto., C.P. 36000, Mexico}
\email{mohammad.jabbari@cimat.mx}

\maketitle

%\begin{abstract}
%The exact range of the positive real parameter $p$ is determined so that the commutators in the C*-algebra of Toeplitz operators associated to continuous symbols and acting on Bergman spaces over generalized complex ellipsoids are Schatten $p$-summable.
%\end{abstract}
\begin{abstract}
The exact range of the positive real parameter $p$ is determined so that the multiplications by polynomial functions acting on Bergman spaces over complex ellipsoids are Schatten $p$-summable.
\end{abstract}

%\tableofcontents

\section{Indtroduction}
Recall that an operator $T$ on a Hilbert space is (Schatten) $p$-summable, $0<p<\infty$, if $(TT^*)^{p/2}$ is trace class \cite[Chapter 11]{dunford-s}, \cite{gohberg-krein,simon}. 
(Equivalently, if the sequence of singular values $s_j=\inf\{\|T-F\|:\mathrm{rank}(F)<j\}$ belongs to $l^p$.)
A commuting tuple $(T_1,\ldots,T_m)$ of operators on a Hilbert space $\cH$ is called $p$-essentially normal if all of the commutators $[T_j,T^{\ast}_k]$, $j,k=1,\ldots,m$ are $p$-summable.
Alternatively, $p$-essential normality can be attributed to the Hilbert $\bC[z_1,\ldots,z_m]$-module generated by $(T_1,\ldots,T_m)$, namely, $\cH$ with the module action $P(z_1,\ldots,z_m)\cdot f$, $P\in \bC[z_1,\ldots,z_m]$, $f\in\cH$ given by $P(T_1,\ldots,T_m)f$.
In noncommutative geometry, $p$-essential normality is important in the study of differentiable structures \cite{douglas-smoothness,douglas-index,douglas-voiculescu-smoothness,gong-smoothness} as well as the construction of the Chern-Connes character for noncommutative spaces \cite{connes-NCDG,connes-NCG}.

This article is about the $p$-essential normality of the multiplications by the coordinate functions acting on the Bergman space $L^2_a$ of square-integrable holomorphic functions over the egg domains of the form
\begin{equation}
\Om_1:=\left\{\sum\limits_{j=1}^{m}\left|z_j\right|^{2p_j}<1\right\}\sub\bC^m,\quad p_j>0,\label{egg1}
\end{equation}
or more generally of the form
\begin{equation}
\Om_2:=\left\{\sum_{k=1}^K\left(\sum\limits_{j=1}^{j_k}\left|z_{jk}\right|^{2p_{jk}}\right)^{a_k}<1\right\}\sub\bC^{j_1+\cdots+j_K},\quad p_{jk},a_k>0.
\label{egg2}
\end{equation}
%where $p_{jk},a_k$ are positive real numbers and $j_k$ are positive integers.
(When all $p_{jk}$ equal $1$, $\Om_2$ is called a generalized complex ellipsoid in \cite{jarnicki-pflug,kkm}.)
Note that $\Om_2$ is always pseudoconvex (because it is a logarithmically convex, complete Reinhard domain \cite[Theorem 3.28]{range}), but, assuming all $a_k\geq 1$, it is strongly pseudoconvex with $C^2$ boundary exactly when all $p_{jk}, a_k$ equal $1$. 
(Compare \cite{dangelo0}.)

To put the main result of this paper in a proper context, a brief survey of the literature related to the $p$-essential normality of the usual Hilbert modules of analytic functions is given.
In what follows, module actions are always given by multiplication with polynomial functions.
First, note that the $p$-essential normality of, say, a Bergman module is closely related to the Schatten membership of the Hankel operators $H_{\overline{f}}:=(I-P)M_{\overline{f}}P$ as well as the commutators $C_f:=[M_f,P]$ associated to coordinate functions $f$.
(Here, $M_f$ is the multiplication by $f$, and $P$ is the orthogonal projection in $L^2$ onto $L^2_a$, the so-called Bergman projection.)
% be the (big) Hankel operator associated to a function $f\in L^2$.
More precisely, the formal identities
\[
\left[M_{z_j},M_{z_k}^*\right]=C_{z_j}M_{\overline{z_k}},
\]
\[
C_f=H_f-H_{\overline{f}}^*,\quad
(I-P)C_f=H_f,\quad
C_f(I-P)=-H_{\overline{f}}^*
\]
show that the Bergman module is $p$-essentially normal if all commutators $C_f$ associated to coordinate functions are $p$-summable, and that $C_f$ is $p$-summable if and only if both $H_f$ and $H_{\overline{f}}$ are so.
(Obviously, $H_f$ is zero for holomorphic functions $f$.)
Here is our survey:
\begin{itemize}
\item
The Bergman module on the unit ball of $\bC^m$ is $p$-essentially normal exactly when $p>1/2$ and $m=1$, or $p>m>1$ \cite{arazy-fisher-peetre,arazy-fisher-janson-peetre}.
The $m$-shift (or Drury-Arveson) module $H^2_m$ is $p$-essentially normal if and only if $p>0$ and $m=1$, or $p>m>1$ \cite{arveson-dilation}.

\item
The Hardy module on a smoothly bounded, strongly pseudoconvex domain is $p$-essentially normal if $p$ is strictly larger than the complex dimension of the domain \cite[Theorem 13.1]{boutetdemonvel-guillemin}, \cite[Proposition 1]{ee}.

\item
The Schatten membership of the commutators $C_f$, as well as the Hankel operators, are studied in:
\begin{itemize}
\item 
\cite{arazy-fisher-peetre,arazy-fisher-peetre-planar,arazy-fisher-janson-peetre,isralowitz-bergman,janson,pau,raimondo,rochberg-semmes,wallsten,xia-1,xia-2,zhu-schatten,zhu-OT}
for the Bergman space on the unit ball.

\item
\cite{zheng} 
for the Bergman space on bounded symmetric domains.

\item
\cite{fang-xia-schatten,feldman-rochberg,peller,peller-vectorvalued,peller-lessthanone,peller-book,rochberg,semmes,zhu-FT}
for the Hardy space on the unit sphere.

\item
\cite{li.H,li.H-luecking,peloso}
for the Hardy space on smoothly bounded, strongly pseudoconvex domains.

\item
\cite{isralowitz-bargmann,xia-zheng} 
for the Segal-Bargmann (or Fock) space.

\item
\cite{beatrous-li-2} for $P$ being a Calderon-Zygmund operator associated to a homogeneous space.
%\cite{beatrous-li-2} studies the Schatten membership of the commutator of a multiplication operator with a Calderon-Zygmund  operator in the setting of homogeneous spaces.
\end{itemize}

\item
The essential normality of the Bergman module on a  bounded, pseudoconvex domain  is equivalent to the compactness of the $\overline{\pa}$-Neumann operator $N_1$ on $(0,1)$-forms with $L^2$ coefficients \cite{catlin-dangelo,fu-straube,salinas,salinas-sheu-upmeier}.
(The essential normality means that the C*-algebra generated by multiplications with coordinate functions is commutative modulo the ideal of compact operators.)
Several sufficient conditions for this are given in the literature \cite{catlin-globalregularity,catlin-boundaryinvariants,henkin-iordan,mcneal-compactness}. 
For example, strongly pseudoconvex domains, domains of finite type, and pseudoconvex domains with real analytic boundary have compact $N_{1}$.
Other approaches to the essential normality of Bergman modules are given in \cite{axler,beatrous-li-1,bekolle-berger-coburn-zhu,curto-muhly,krantz-li,salinas-sheu-upmeier,upmeier}.

%\item
%\cite{beatrous-li-2} studies 

%There are many works in the literature which study the compactness of commutators $C_f$ as well as Hankel operators:
%\cite{axler} (for Bergman spaces on the unit disk), 
%\cite{bekolle-berger-coburn-zhu} (for Bergman space on symmetric bounded domains),
%\cite{coifman-rochberg-weiss} (for Hardy spaces),
%\cite{salinas,salinas-sheu-upmeier}.
%Specially, \cite{beatrous-li-2} studies the Schatten membership of the commutator of a multiplication operator with a Calderon-Zygmund  operator in the setting of homogeneous spaces.
\end{itemize}

The first main result  of this article is: 

\begin{theorem}\label{theorem1}
The Bergman module on the domain (\ref{egg1}) is $p$-essentially normal if and only if 
\[
p>\begin{cases}
\frac{1}{2},&m=1,\\
\max\left\{
m,
p_j(m-1): j=1,\ldots,m\right\},&m>1.
\end{cases}
\]
\end{theorem}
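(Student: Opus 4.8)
The plan is to realize the Bergman module explicitly in the monomial basis, reduce $p$-essential normality to the convergence of two families of numerical series, and then estimate those series by a stratification of $\bN^m$.

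\emph{Step 1: the model.} I would first use that $\Om_1$ is a bounded complete Reinhardt domain, so the monomials $z^\al=z_1^{\al_1}\cdots z_m^{\al_m}$ ($\al\in\bN^m$) are pairwise orthogonal in $L^2_a(\Om_1)$ and, after normalization, form an orthonormal basis $\{e_\al\}$. Passing to polar coordinates in each variable and substituting $t_j=\abs{z_j}^{2p_j}$ turns $\norm{z^\al}^2$ into a Dirichlet integral over the standard simplex, giving
\[
\gamma_\al:=\norm{z^\al}_{L^2_a(\Om_1)}^2=c_{m,p}\cdot\frac{\prod_{j=1}^m\Gamma\!\bigl(\tfrac{\al_j+1}{p_j}\bigr)}{\Gamma\!\bigl(1+\sum_{j=1}^m\tfrac{\al_j+1}{p_j}\bigr)}
\]
for a positive constant $c_{m,p}$. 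Hence each coordinate multiplier acts as the weighted shift $M_{z_j}e_\al=w_j(\al)\,e_{\al+\epsilon_j}$ with $w_j(\al)^2=\gamma_{\al+\epsilon_j}/\gamma_\al$, where $\epsilon_j$ is the $j$-th unit vector of $\bN^m$; writing $P_s(x):=\Gamma(x+s)/\Gamma(x)$ for $s>0$, one gets $w_j(\al)^2=P_{1/p_j}(u_j)/P_{1/p_j}(v)$ with $u_j:=(\al_j+1)/p_j$ and $v:=1+\sum_i(\al_i+1)/p_i$.

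\emph{Step 2: reduction to series.} A short computation in the basis $\{e_\al\}$ shows that $[M_{z_j},M_{z_j}^*]$ is diagonal with entries $d_j(\al)=w_j(\al-\epsilon_j)^2-w_j(\al)^2$ (the first term read as $0$ when $\al_j=0$), and that for $j\ne k$ the operator $[M_{z_j},M_{z_k}^*]$ is a weighted shift in the direction $\epsilon_j-\epsilon_k$ supported on $\{\al:\al_k\ge1\}$, namely $[M_{z_j},M_{z_k}^*]e_\al=c_{jk}(\al)\,e_{\al-\epsilon_k+\epsilon_j}$ with $c_{jk}(\al)=w_j(\al-\epsilon_k)w_k(\al-\epsilon_k)-w_j(\al)w_k(\al+\epsilon_j-\epsilon_k)$. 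Since the singular values of a weighted shift are exactly the absolute values of its weights, the module is $p$-essentially normal if and only if $\sum_{\al}\abs{d_j(\al)}^p<\infty$ for all $j$ and $\sum_{\al:\,\al_k\ge1}\abs{c_{jk}(\al)}^p<\infty$ for all $j\ne k$.

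\emph{Step 3: size of the weights.} This is the technical core. Holding the coordinates $\al_i$ ($i\ne j$) fixed, write $d_j(\al)=G(u_j-\tfrac1{p_j})-G(u_j)$ with $G(t)=P_{1/p_j}(t)/P_{1/p_j}(t+\rho)$, where $\rho:=v-u_j$ is independent of $\al_j$. Combining the mean value theorem with the elementary bounds $\psi(x+s)-\psi(x)\asymp(1+x)^{-1}$ and $\psi'(x)-\psi'(x+s)\asymp(1+x)^{-2}$ for the digamma function $\psi$ (uniform for $x\ge c>0$ and $s$ in a fixed compact set), I expect to obtain, uniformly over $\al\in\bN^m$ (including $\al_j=0$, handled directly),
\[
\abs{d_j(\al)}\asymp\frac{(1+\al_j)^{\frac1{p_j}-1}\,\bigl(1+\abs{\al}-\al_j\bigr)}{(1+\abs{\al})^{\frac1{p_j}+1}}.
\]
For the mixed weights, telescoping $c_{jk}(\al)=w_j(\al-\epsilon_k)\bigl(w_k(\al-\epsilon_k)-w_k(\al+\epsilon_j-\epsilon_k)\bigr)+w_k(\al+\epsilon_j-\epsilon_k)\bigl(w_j(\al-\epsilon_k)-w_j(\al)\bigr)$ and bounding each difference of consecutive weights with the same tools yields the crude estimate $\abs{c_{jk}(\al)}\lesssim(1+\abs{\al})^{-1}$, which turns out to suffice.

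\emph{Step 4: counting and conclusion.} I would stratify $\bN^m$ by $N=\abs{\al}$ and $k=\al_j$, using $\#\{\al:\abs{\al}=N,\ \al_j=k\}=\binom{N-k+m-2}{m-2}\asymp(1+N-k)^{m-2}$, so that
\[
\sum_{\al}\abs{d_j(\al)}^p\asymp\sum_{N\ge0}(1+N)^{-p(1+\frac1{p_j})}\sum_{k=0}^{N}(1+k)^{p(\frac1{p_j}-1)}(1+N-k)^{m-2+p}.
\]
Evaluating the inner sum by splitting at $k\approx N/2$ — separately in the cases $p_j\le1$, $p_j>1$ with $p(1-\tfrac1{p_j})\le1$, and $p_j>1$ with $p(1-\tfrac1{p_j})>1$ — should show that this double series is finite exactly when $p>\max\{m,\,p_j(m-1)\}$. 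For $m>1$ the bound from Step 3 gives $\sum_{\al:\,\al_k\ge1}\abs{c_{jk}(\al)}^p\lesssim\sum_{N}(1+N)^{m-1-p}$, finite whenever $p>m$, so the mixed commutators impose no new condition; for $m=1$ one has $\Om_1=\bD$, $\abs{d(n)}\asymp(1+n)^{-2}$, and the single series converges iff $p>1/2$. Intersecting these conditions over all $j$ and $k$ yields the claimed range.

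\textbf{Main obstacle.} The delicate point is Step 3 — producing the two-sided estimate for $\abs{d_j(\al)}$ \emph{uniformly} on $\bN^m$, in particular its behaviour near the hyperplane $\al_j=0$, which is precisely what generates the exponent $p_j(m-1)$ — together with the bookkeeping in Step 4 needed to make the threshold come out as $\max\{m,p_j(m-1)\}$ rather than something weaker (the boundary case $p(1-1/p_j)=1$ produces a spurious logarithm that must be absorbed). The off-diagonal commutators, by contrast, only ever contribute the condition $p>m$ and are dispatched by the soft estimate above.
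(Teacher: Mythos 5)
Your plan is correct and reaches the right threshold, and its skeleton is the same as the paper's: orthonormal monomial basis with the explicit Gamma-function norm $\gamma_\al$, observation that $[M_{z_j},M_{z_j}^*]$ is diagonal and $[M_{z_j},M_{z_k}^*]$ is a weighted translation, hence $p$-summability becomes convergence of lattice series in the weights. Where you genuinely diverge is in the two technical inputs. The paper gets the weight asymptotics from the Tricomi--Erd\'elyi expansion of $\Gamma(x+a)/\Gamma(x+b)$ (its Lemma \ref{fact-gamma}) and then decides convergence of the resulting series by a general criterion for multi-variable zeta-type sums (Lemma \ref{fact-1}); you instead propose mean-value/digamma bounds for the ratios $P_s(t)/P_s(t+\rho)$ and a direct stratification of $\bN^m$ by $(\abs{\al},\al_j)$ with a split of the inner sum at $k\approx N/2$ --- this is more elementary and self-contained, and your three-case analysis does reproduce exactly the cutoff $\max\{m,p_j(m-1)\}$ (including the boundary case $p(1-1/p_j)=1$, where the logarithm is harmless). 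The second genuine difference is the mixed commutators: the paper computes a two-sided asymptotic for their weights and shows $p$-summability iff $p>\dim\Om_1$ (Proposition \ref{proposition-1}(c)), whereas you settle for the upper bound $\abs{c_{jk}(\al)}\lesssim(1+\abs{\al})^{-1}$. That shortcut is legitimate for the theorem, since for $m>1$ the necessity of $p>m$ is already forced by the diagonal commutators (your threshold $\max\{m,p_j(m-1)\}\geq m$), so only sufficiency is needed off the diagonal; you just lose the sharper standalone statement about $[M_{z_j},M_{z_k}^*]$. Two points to watch when writing it up: the two-sided bound for $\abs{d_j(\al)}$ must be uniform over all of $\bN^m$ (the monotonicity of $G$ and comparability of $G'$ at the endpoints of the mean-value interval need to be checked for small $u_j$, in particular across the face $\al_j=0$), and in bounding $c_{jk}$ by telescoping you should bound differences of weights via differences of their squares with the correct square-root normalization, i.e.\ $w_k(\beta)-w_k(\beta+\epsilon_j)=\bigl(w_k(\beta)^2-w_k(\beta+\epsilon_j)^2\bigr)/\bigl(w_k(\beta)+w_k(\beta+\epsilon_j)\bigr)$, which still gives $O\bigl((1+\abs{\al})^{-1}\bigr)$ because the smallness of the denominator is compensated by the factor $u_k^{1/p_k}$ in the numerator. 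With those details supplied, your argument is a complete and somewhat more elementary proof than the paper's, at the cost of the finer information the paper extracts about each individual commutator.
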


This result was first stated without proof in \cite{jabbari-egg-index}.
Our next result generalizes this:

\begin{theorem}\label{theorem2}
The Bergman module on the domain $\Om_2$ given in (\ref{egg2}) is $p$-essentially normal if and only if 
\[
p>\begin{cases}
\frac{1}{2},&d=1,\\
\max\left\{
d,
q_1,\ldots,q_K
\right\},&d>1,
\end{cases}
\]
where
\[
d=j_1+\cdots+j_K
\]
is the complex dimension of $\Om_2$, and for each $k=1,\ldots,K$,
\[
q_k
=\begin{cases}
\max\left\{a_kp_{jk}(d-j_k):j=1,\ldots,j_k\right\},& j_k=1,\\
\max\left\{p_{jk}(d-1),
a_kp_{jk}(d-j_k):j=1,\ldots,j_k\right\},& j_k>1=a_k,\\
\max\left\{p_{jk}(d-1),
a_kp_{jk}(d-j_k),
\frac{2a_k(d-j_k)}{1/p_{jk}+1/p_{lk}}:j,l=1,\ldots,j_k,\ j\neq l\right\},& j_k>1\neq a_k.
\end{cases}
\]
\end{theorem}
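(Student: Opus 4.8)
The plan is to reduce everything to the computation of Schatten norms of certain diagonal Hankel/commutator operators with respect to the natural monomial orthogonal basis of $L^2_a(\Om_2)$. On a complete Reinhardt domain, monomials $z^{\alpha}=\prod z_{jk}^{\alpha_{jk}}$ are orthogonal, so $M_{z_{jk}}$ acts as a weighted shift and the commutator $C_{z_{jk}}=[M_{z_{jk}},P]$ is, up to the weighted-shift structure, controlled by the ratios $\gamma_{\alpha}:=\lVert z^{\alpha+e_{jk}}\rVert^2/\lVert z^{\alpha}\rVert^2$ of consecutive monomial norms. Concretely, $[M_{z_{jk}},M_{z_{lk'}}^*]$ and the Hankel operator $H_{\overline{z_{jk}}}$ become essentially diagonal (or block-diagonal over fibers of the map $\alpha\mapsto\alpha-e_{jk}$) with entries expressed through differences $\sqrt{\gamma_{\alpha+e_{lk'}}}-\sqrt{\gamma_{\alpha}}$ and similar first-order finite differences of $\alpha\mapsto\sqrt{\gamma_{\alpha}}$. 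Thus the whole problem is converted into: (i) find sharp asymptotics for $\lVert z^{\alpha}\rVert^2_{L^2(\Om_2)}$ as $|\alpha|\to\infty$, and (ii) decide for which $p$ the resulting explicitly-indexed sequence of singular values lies in $\ell^p(\bN^d)$.

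\textbf{Key steps, in order.} First I would compute $\lVert z^{\alpha}\rVert^2_{L^2(\Om_2)}$ exactly: the defining inequality of $\Om_2$ is a ``nested'' sum, and after the substitutions $t_{jk}=|z_{jk}|^{2p_{jk}}$ and $s_k=(\sum_j t_{jk})^{a_k}$, the integral factors into Beta-type integrals; this yields a closed form as a ratio of Gamma functions in the variables $\alpha_{jk}$. Second, using Stirling's formula I would extract the leading asymptotics of $\gamma_{\alpha}$ and of its first differences in each coordinate direction, keeping careful track of the ``boundary'' directions $j_k=1$ versus the ``interior'' directions, and of the exponents $a_k\neq 1$ versus $a_k=1$ — this is where the three cases in the definition of $q_k$ will emerge. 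Third, I would assemble the singular-value sequences of $H_{z_{jk}}$, $H_{\overline{z_{jk}}}$, and $[M_{z_{jk}},M_{z_{lk}}^*]$ from these difference asymptotics; the decay rate of each entry is a ratio of monomials in $\alpha$, so membership in $\ell^p$ reduces to convergence of an explicit multi-dimensional sum $\sum_{\alpha}(\text{polynomial in }\alpha)^{-p/2}$, whose threshold is computed by comparison with an integral (a Dirichlet-type integral over the positive cone). Fourth, I would collect the thresholds over all pairs $(j,k),(l,k)$ and all three commutator types, and verify that the maximum of these thresholds is exactly the quantity $\max\{d,q_1,\ldots,q_K\}$ in the statement (with the special one-dimensional case $d=1$ handled directly, as $H^2_1$ in the literature survey). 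For the ``only if'' direction I would exhibit, for $p$ at or below each threshold, a single diagonal block or a single coordinate slice on which the relevant operator already fails to be $p$-summable.

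\textbf{Main obstacle.} The principal difficulty is the mixed-term threshold $\tfrac{2a_k(d-j_k)}{1/p_{jk}+1/p_{lk}}$ appearing only when $j_k>1$ and $a_k\neq 1$: this does not come from a single weighted-shift commutator $[M_{z_{jk}},M_{z_{jk}}^*]$ but from the \emph{cross} commutators $[M_{z_{jk}},M_{z_{lk}}^*]$ with $j\neq l$ within the same block $k$. For these, the operator is not diagonal but block-diagonal over two-dimensional fibers, and one must diagonalize $2\times 2$ (or finite) blocks whose entries involve $\gamma_{\alpha+e_{jk}}/\gamma_{\alpha}$ in \emph{two} directions simultaneously; the singular value of such a block is governed by a harmonic-mean-type combination of the two growth rates, which is precisely the source of the $1/p_{jk}+1/p_{lk}$ in the denominator. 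Getting the sharp constant here — rather than an order-of-magnitude bound — requires a delicate expansion of the Gamma-function ratio to the right order and a careful argument that the off-diagonal coupling is genuinely of that size (neither negligible nor dominant); I expect this to be the longest and most technical part of the argument, and the place where the hypothesis $a_k\neq 1$ (which makes the nested power genuinely nonlinear) is essential. The remaining estimates, while lengthy, are routine Stirling-asymptotics and integral comparisons once the exact monomial norms are in hand.
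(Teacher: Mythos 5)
Your overall outline does match the paper's route: exact monomial norms on the Reinhardt domain expressed through Gamma/Beta functions (the paper's Lemma \ref{lemma2}), asymptotic expansion of the resulting weight ratios, and reduction of Schatten membership to the convergence of explicit lattice sums. However, your account of the key phenomenon --- the mixed threshold $\frac{2a_k(d-j_k)}{1/p_{jk}+1/p_{lk}}$ --- is structurally wrong. The cross commutator $[M_{z_{jk}},M_{z_{lk}}^*]$ within a block is a weighted shift: it sends each normalized monomial $b_{\alpha}$ to a scalar multiple of the single monomial $b_{\alpha-e_{lk}+e_{jk}}$, so $\sqrt{TT^*}$ is already diagonal in the monomial basis and no $2\times 2$ block diagonalization or ``off-diagonal coupling'' analysis is needed; the singular values are just the absolute values of the shift weights $\mu$ (Proposition \ref{proposition-2}(a)). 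The harmonic-mean denominator $1/p_{jk}+1/p_{lk}$ enters through the asymptotics of that single weight: $\mu'$ decays like $\alpha_j^{1/(2p_{jk})}\alpha_l^{1/(2p_{lk})}$ divided by powers of the block sums, and the crucial case split is in the second-order expansion of $\mu''-1$, where for $a_k=1$ the terms of size $(\alpha_j+\alpha_l+\cA)^{-1}$ cancel, while for $a_k\neq 1$ a term $\frac{a_k^2-1}{\alpha_j+\alpha_l+\cA}$ survives and changes the decay rate; for $a_k<1$ this even produces a factor $\left|\alpha_j+\alpha_l+\cA-tL\right|$ with $t=a_k^{-2}-1$ in the weight, which your proposal does not anticipate and which must be handled in the summability analysis.

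Relatedly, the step you describe as ``routine \dots comparison with a Dirichlet-type integral'' is where much of the paper's actual work lies. The series that come out are multiple zeta-type sums whose terms contain several coupled linear forms raised to possibly negative powers, e.g.\ factors $(i_1+i_2)^{a_{12}}$, $(i_1+i_2+i_3)^{a_{123}}$ and the absolute-value factor $|i_1+i_2+i_3-i_4|^{a_{1234}}$; a naive positive-cone integral comparison gives sufficiency in the easy regimes but not the sharp if-and-only-if thresholds, especially when exponents are negative or at the borderline. The paper devotes Lemmas \ref{fact-1}, \ref{fact-2} and \ref{fact-3} to exact convergence criteria for precisely these series (by induction on the number of variables, splitting the lattice into regions where one variable dominates), and even then only proves the converse of Lemma \ref{fact-2} for the specific series needed. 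So your plan is salvageable --- the diagonal/weighted-shift reduction and Gamma asymptotics are the right first two steps --- but as written it misattributes the source of the hardest threshold and underestimates the series-convergence machinery required to get the exact cut-offs rather than order-of-magnitude bounds.
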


%The author believes that the converse of Theorem \ref{theorem2} is also true, but he has not yet been able to provide an argument.
%(This is accomplished by proving the converse of Lemma \ref{fact-2} below.)

An interesting phenomenon appearing here (already observed in \cite{krantz-li-rochberg}; see also \cite{jabbari-egg-index}) is that for weakly pseudoconvex domains, in contrast to strongly pseudoconvex ones, the cut-off values for the $p$-summability of the Bergman modules depend not only on the dimensions of the domains but also on their boundary geometry. 
In our case of the egg domains of type (\ref{egg1}) or (\ref{egg2}), this geometry is manifested in the maximum order of contact of the boundary with complex analytic curves (in the sense of D'Angelo \cite{dangelo-annals,dangelo-book}).
%In another direction, the forthcoming article \cite{jabbari-schatten} studies the Chern-Connes character of the Toeplitz algebra associated to the Bergman space on egg domains.

This paper is organized as follows:
I gather several facts needed in the proof of the main results in Section \ref{section-preliminaries}.
Theorems \ref{theorem1} and \ref{theorem2} are respectively proved in Sections \ref{section-theorem1} and \ref{section-theorem2}.

\section{Preliminaries}\label{section-preliminaries}
This section provides some facts needed in the proof of Theorems \ref{theorem1} and \ref{theorem2}.

\begin{lemma}\label{fact-gamma}
Given positive real numbers $a,b$, and real variable $x$, we have
\[
\frac{\Gamma(x+a)}{\Gamma(x+b)}x^{b-a}
=1+\frac{(a-b)(a+b-1)}{2x}+\frac{(a-b)(a-b-1)\left(3(a+b-1)^2-a+b-1\right)}{24x^2}+O\left(x^{-3}\right),
\]
\[
\frac{\Gamma(x+a)^2}{\Gamma(x)\Gamma(x+2a)}
=
1-\frac{a^2}{x}+\frac{a^2(a^2+2a-1)}{2x^2}+O\left(x^{-3}\right),
\]
\[
\frac{\Gamma(x+a)\Gamma(x+2a+b)}{\Gamma(x+a+b)\Gamma(x+2a)}
=
1+\frac{ab}{x}+\frac{ab(ab-3a-b+1)}{x^2}+O\left(x^{-3}\right),
\]
\[
\frac{(x+a)^2}{x(x+2a)}
= 
1+\frac{a^2}{x^2}+O\left(x^{-3}\right),
\]
\[
\frac{(x+a)(x+2a+b)}{(x+a+b)(x+2a)}
=
1-\frac{ab}{x^2}+O\left(x^{-3}\right),
\]
as $x\ra\infty$.
\end{lemma}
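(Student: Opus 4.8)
The statement to prove is Lemma \ref{fact-gamma}, which gives asymptotic expansions of various ratios of Gamma functions (and one purely rational function) as $x \to \infty$.

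My plan:

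The plan is to derive all five asymptotic expansions from a single master tool, namely the classical asymptotic expansion of $\log \Gamma(x+a)$ for large $x$, which follows from Stirling's formula. Concretely, Stirling gives
\[
\log \Gamma(x+a) = \left(x+a-\tfrac{1}{2}\right)\log x - x + \tfrac{1}{2}\log(2\pi) + \frac{a^2-a+\tfrac{1}{6}}{2x}\cdot(\text{const}) + \cdots,
\]
but it is cleaner to use the Binet-type expansion in the form: for fixed $a$,
\[
\log\Gamma(x+a) - \log\Gamma(x) = a\log x + \frac{a(a-1)}{2x} - \frac{a(a-1)(2a-1)}{12 x^2} + O(x^{-3}),
\]
which is obtained by integrating the digamma expansion $\psi(x+a) = \log x + \frac{a - 1/2}{x} - \frac{\cdots}{x^2} + \cdots$ or, more directly, by the known expansion $\log\Gamma(x+a) = (x+a-\tfrac12)\log x - x + \tfrac12\log(2\pi) + \sum_{k\ge1} \frac{(-1)^{k+1} B_{k+1}(a)}{k(k+1)x^k}$ with $B_{k+1}(a)$ the Bernoulli polynomials. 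First I would state this expansion (citing a standard reference such as \cite{dunford-s} or a special-functions text, or simply Stirling) and record the coefficients through order $x^{-2}$.

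Next I would treat each of the five formulas as an instance. For the first, $\log\left(\frac{\Gamma(x+a)}{\Gamma(x+b)}x^{b-a}\right) = \big[\log\Gamma(x+a)-\log\Gamma(x)\big] - \big[\log\Gamma(x+b)-\log\Gamma(x)\big] + (b-a)\log x$; substituting the master expansion, the $\log x$ terms cancel, and one collects the $1/x$ and $1/x^2$ coefficients, then exponentiates via $e^{u} = 1 + u + u^2/2 + O(u^3)$ keeping terms through $x^{-2}$. The second and third formulas are handled identically: for the second, the exponent is $2\log\Gamma(x+a) - \log\Gamma(x) - \log\Gamma(x+2a)$, whose leading $\log x$ terms cancel by design (coefficients $2a - 0 - 2a = 0$); for the third, the exponent is $[\log\Gamma(x+a) - \log\Gamma(x+a+b)] + [\log\Gamma(x+2a+b) - \log\Gamma(x+2a)]$, again with cancelling log terms. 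The last two formulas are elementary: $\frac{(x+a)^2}{x(x+2a)} = \left(1+\tfrac{a}{x}\right)^2\left(1+\tfrac{2a}{x}\right)^{-1}$ and similarly for the fifth, expanded by the geometric/binomial series through $x^{-2}$; notice the $x^{-1}$ terms cancel there too.

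The only real work is bookkeeping: tracking three orders of Taylor coefficients through a subtraction and then an exponentiation, and checking the arithmetic that produces the displayed numerators such as $(a-b)(a-b-1)\big(3(a+b-1)^2 - a + b - 1\big)/24$. I expect the main obstacle to be purely this algebraic care — ensuring no $x^{-2}$ contribution is dropped when squaring the $x^{-1}$ part of the exponent, and matching the final simplified form — rather than any conceptual difficulty; there is no hard analysis here, just a disciplined application of Stirling's series. I would present the first identity in full as a template and then remark that the remaining four follow by the same computation, recording only the intermediate exponent in each case.
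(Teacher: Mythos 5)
Your proposal is correct, but it takes a genuinely different route from the paper: the paper's proof of this lemma is essentially a citation --- the first expansion is quoted from Tricomi--Erd\'elyi and from Andrews--Askey--Roy, and the second and third are declared ``immediate from it'' (by writing each quotient as a product of two ratios of the form $\Gamma(x+a)/\Gamma(x+b)$ times suitable powers of $x$ and multiplying the expansions; the two rational identities are elementary) --- whereas you rederive everything from scratch via the Stirling/Binet expansion $\log\Gamma(x+h)-\log\Gamma(x)=h\log x+\frac{h(h-1)}{2x}-\frac{h(h-1)(2h-1)}{12x^2}+O\left(x^{-3}\right)$ followed by exponentiation of the error term. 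Your route buys self-containedness at the cost of coefficient bookkeeping, and that bookkeeping, if you carry it out, actually pays off: for the third quotient it yields the $x^{-2}$ coefficient $\tfrac{1}{2}ab(ab-3a-b+1)$ rather than $ab(ab-3a-b+1)$ as printed (test $a=b=1$: the quotient is $(x+2)/(x+1)=1+x^{-1}-x^{-2}+O\left(x^{-3}\right)$, while the printed coefficient would give $-2x^{-2}$), so the lemma as stated carries a factor-of-two typo in that term. This is harmless for the paper, which only uses the sign and leading order of these corrections, but it is a concrete advantage of your explicit Stirling computation over the citation-plus-multiplication argument; your first and second expansions, and the two rational ones, agree with the stated coefficients, and the exponentiation step (keeping the $u^2/2$ contribution at order $x^{-2}$) is exactly the point you flag, so no gap remains beyond routine algebra.
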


\begin{proof}
The first formula is proved in \cite{et} as well as \cite[Appendix C]{andrews}.
The next two are immediate from it.
\end{proof}

\begin{lemma}\label{fact-sumoftwoseries}
Let $\sum a_j$ and $\sum b_j$ be two series with nonnegative terms, and let $p>0$.
Then $\sum (a_j+b_j)^p$ converges if and only if both $\sum a_j^p$ and $\sum b_j^p$ converge.
\end{lemma}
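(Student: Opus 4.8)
The plan is to reduce the statement to the ordinary comparison test for series of nonnegative terms, by first recording a uniform two-sided pointwise estimate comparing $(a+b)^p$ with $a^p+b^p$. Concretely, I would prove that
\[
\tfrac12\left(a^p+b^p\right)\le(a+b)^p\le 2^p\left(a^p+b^p\right)\qquad(a,b\ge0),
\]
with both constants depending only on $p$, and valid for \emph{every} $p>0$ without a case distinction.

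The estimate follows from monotonicity of $t\mapsto t^p$ on $[0,\infty)$ together with the trivial bounds $\max\{a,b\}\le a+b\le 2\max\{a,b\}$. For the lower inequality, $(a+b)^p\ge\max\{a,b\}^p=\max\{a^p,b^p\}\ge\tfrac12(a^p+b^p)$; for the upper inequality, $(a+b)^p\le\big(2\max\{a,b\}\big)^p=2^p\max\{a^p,b^p\}\le 2^p(a^p+b^p)$. (One could instead split into $p\ge 1$, using convexity of $t\mapsto t^p$, and $0<p<1$, using subadditivity, but the $\max$-trick avoids this.)

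With the pointwise estimate in hand, both implications are immediate because the series in question have nonnegative terms, so their convergence is monotone. If $\sum a_j^p$ and $\sum b_j^p$ both converge, then $\sum(a_j+b_j)^p\le 2^p\big(\sum a_j^p+\sum b_j^p\big)<\infty$. Conversely, if $\sum(a_j+b_j)^p$ converges, then $\sum a_j^p\le\sum\big(a_j^p+b_j^p\big)\le 2\sum(a_j+b_j)^p<\infty$, and symmetrically $\sum b_j^p<\infty$. There is no genuine obstacle in this argument; the only point worth a moment's care is to arrange the constants in the pointwise inequality so that they are uniform in $a,b$ (depending only on $p$), which the argument above does.
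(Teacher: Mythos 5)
Your proof is correct and follows essentially the same route as the paper: both rest on the pointwise comparison $\max\{a^p,b^p\}\leq (a+b)^p\leq 2^p\left(a^p+b^p\right)$ obtained from $\max\{a,b\}\leq a+b\leq 2\max\{a,b\}$ and monotonicity of $t\mapsto t^p$, followed by the comparison test. Your extra step rewriting $\max\{a^p,b^p\}\geq\tfrac12\left(a^p+b^p\right)$ is a harmless repackaging of the same inequality.
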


\begin{proof}
Immediate from 
\[
\max\left\{a_j^p,b_j^p\right\}\leq \left(a_j+b_j\right)^p\leq \left(2\max\left\{a_j,b_j\right\}\right)^p
	\leq 2^p\left(a_j^p+b_j^p\right).
	\]
\end{proof}

The proof of Theorem \ref{theorem1} depends on the following fact about the convergence of higher zeta series.
(Compare \cite{matsumoto,tornheim}.)
In what follows, $\bN$ and $\bN_+$ respectively denote the set of nonnegative and positive integers.

\begin{lemma}\label{fact-1}
	Suppose positive integer $m$ and real numbers $b, a_1, \ldots,a_m$.
	Then:
	
	(a) The series
	\[
	A:=\sum_{i\in\bN_+^m}\frac{i_1^{a_1}\cdots i_m^{a_m}}{(i_1+\cdots+i_m)^b}
	\]
	converges if and only if
	\begin{equation}
	b>\max\left\{\sum_{j\in J} (a_j+1):\emptyset\neq J\sub\{1,\ldots,m\}\right\}.\label{iff}
	\end{equation}
	Specially, if all $a_j$ are $\geq -1$, then $A$ converges if and only if 
	\[
	b>a_1+\cdots+a_m+m.
	\]
	
	(b)
	Given positive integer $k$ and real number $a$, the series
	\[
	B:=\sum_{i\in\bN_+^{m+k}}\frac{i_1^{a_1}\cdots i_{m}^{a_m}(i_{m+1}+\cdots+i_{m+k})^a}{(i_1+\cdots+i_{m+k})^b}
	\]
	converges if and only if the series 
	\[
	\sum_{i\in\bN_+^{m+1}}\frac{i_1^{a_1}\cdots i_{m}^{a_m}i_{m+1}^{a+k-1}}{(i_1+\cdots+i_{m+1})^b}
	\]
converges, if and only if
	\begin{equation*}
	b>\max\left\{\sum_{j\in J} (a_j+1),a+k,a+k+\sum_{j\in J} (a_j+1):\emptyset\neq J\sub\{1,\ldots,m\}\right\}.
	\end{equation*}	
\end{lemma}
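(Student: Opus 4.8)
The plan is to prove part (a) by a dyadic decomposition of the summation index, which reduces the convergence of $A$ to that of an elementary geometric-type series over $\bN^m$, and then to deduce part (b) from part (a) by a counting argument. (An alternative is to induct on $m$, summing over one variable at a time, but the non-monotonicity of the summand and the logarithmic borderline cases make this more cumbersome, while the strict inequalities in (\ref{iff}) make the dyadic estimate lossless.)

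For part (a), I would partition $\bN_+^m$ into the blocks $Q_n=\{i:2^{n_j}\le i_j<2^{n_j+1},\ j=1,\ldots,m\}$ indexed by $n\in\bN^m$. On $Q_n$ one has $i_j^{a_j}\asymp 2^{n_j a_j}$, $i_1+\cdots+i_m\asymp\max_j i_j\asymp 2^{\max_j n_j}$ (here $m$ is fixed), and $\#Q_n\asymp 2^{n_1+\cdots+n_m}$, with all implied constants depending only on $m$ and the $a_j$. Hence
\[
A\asymp\sum_{n\in\bN^m}2^{\,\sum_j(a_j+1)n_j\,-\,b\max_j n_j}.
\]
To analyze the right-hand side, split $\bN^m$ according to which coordinate realizes the maximum (resolving ties by taking the least such index); on the piece where $n_{j_0}=\max_j n_j$ one sums a geometric progression in each $n_j$, $j\ne j_0$, over $0\le n_j\le n_{j_0}$, and the resulting (polynomially corrected) geometric series in $n_{j_0}$ converges precisely when $b>(a_{j_0}+1)+\sum_{j\ne j_0,\ a_j>-1}(a_j+1)=\max\{\sum_{j\in J}(a_j+1):j_0\in J\}$. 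Taking $j_0=1,\ldots,m$ and intersecting the conditions yields exactly (\ref{iff}); the indices with $a_j+1=0$ contribute only polynomial factors in $n_{j_0}$, which are harmless because (\ref{iff}) is strict. The ``specially'' clause is immediate, since when all $a_j\ge-1$ the maximum over $J$ is attained at $J=\{1,\ldots,m\}$.

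For part (b), I would fix $N:=i_{m+1}+\cdots+i_{m+k}$ and use that the number of $(i_{m+1},\ldots,i_{m+k})\in\bN_+^k$ with sum $N$ equals $\binom{N-1}{k-1}$, which is $\asymp N^{k-1}$ for $N\ge k$. Summing over these tuples first turns the weight $(i_{m+1}+\cdots+i_{m+k})^a$ into $N^a\cdot N^{k-1}=N^{a+k-1}$, and the finitely many slabs $1\le N<k$ contribute series in $i_1,\ldots,i_m$ governed by the weaker threshold $b>\max\{\sum_{j\in J}(a_j+1)\}$, which is subsumed by the asserted condition; thus $B$ converges if and only if $B':=\sum_{i\in\bN_+^{m+1}}i_1^{a_1}\cdots i_m^{a_m}i_{m+1}^{a+k-1}(i_1+\cdots+i_{m+1})^{-b}$ converges (the ``only if'' direction being checked by restricting $B$ to $N=k$, to $i_1=\cdots=i_m=1$, and to $i_j=1$ for $j\notin J$). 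Applying part (a) to $B'$ with the $m+1$ exponents $a_1,\ldots,a_m,a+k-1$, and splitting the subsets $\emptyset\ne J\subseteq\{1,\ldots,m+1\}$ into the cases $m+1\notin J$, $J=\{m+1\}$, and $m+1\in J$ with $J\ne\{m+1\}$, produces exactly the three families of inequalities in the statement.

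The main technical obstacle is the bookkeeping in part (a): verifying that the dyadic sum is genuinely comparable to $A$ (the block $n_j=0$ is a single lattice point, and the summand is not monotone in the $i_j$ when $a_j>0$), and then tracking the nested geometric sums and their polynomial corrections carefully enough that the threshold comes out to be exactly $\max_J\sum_{j\in J}(a_j+1)$. Everything else --- the composition count and the case split in part (b) --- is routine.
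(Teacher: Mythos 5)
Your argument is correct, and for part (a) it takes a genuinely different route from the paper. The paper proves (a) by induction on $m$: it splits into the cases ``all $a_j\geq 0$'' and ``some $a_j<0$'', sums out one variable at a time via the integral test (which requires checking monotonicity of the summand in $i_m$ beyond a threshold), and has to carry along an auxiliary log-weighted series to handle the borderline exponent $a_m=-1$. Your dyadic decomposition replaces all of this: since on each block $Q_n$ every factor is pointwise comparable to its model value $2^{n_ja_j}$, resp.\ $2^{b\max_j n_j}$, with constants depending only on $m,a_j,b$ (no monotonicity needed), the series is two-sidedly comparable to $\sum_n 2^{\sum_j(a_j+1)n_j-b\max_j n_j}$, and the max-coordinate split turns the threshold computation into nested geometric sums in which the $a_j=-1$ indices appear only as harmless polynomial corrections; this is cleaner and treats all sign patterns of the $a_j$ uniformly, at the cost of being less elementary-looking than the comparison/integral-test induction. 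Your part (b) is essentially the paper's own argument (count compositions, replace $\binom{N-1}{k-1}$ by $N^{k-1}$, apply (a) to the $(m+1)$-variable series and enumerate the subsets according to whether they contain $m+1$); the only small slip is your remark about ``slabs $1\le N<k$'' of $B$ --- there are no such terms in $B$ since each $i_{m+j}\geq 1$ forces $N\geq k$; the finitely many small slabs only arise in $B'$ (for $1\le i_{m+1}<k$), and they are disposed of exactly as you indicate, since their convergence is equivalent to $b>\max_J\sum_{j\in J}(a_j+1)$, which is implied by the convergence of the restricted sum. This does not affect the correctness of the equivalence $B<\infty\iff B'<\infty$.
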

\begin{proof}
	(a)
	We prove by induction on $m$ that $A<\infty$ if and only if (\ref{iff}) holds, if and only if 
	\[
	A':=\sum_{i\in\bN_+^m} \frac{i_1^{a_1}\cdots i_m^{a_m}\log(i_1+\cdots+i_m)}{(i_1+\cdots+i_m)^b}<\infty.
	\]
	The case $m=1$ is clear, so assume $m>1$.
	We split the proof into two cases.
	
	\textit{Case I: All $a_j$ are nonnegative.}
	In this case, $A$ is dominated by $\sum (i_1+\cdots+i_m)^{-c}$, where $c=b-\sum a_j$.
	This latter series equals $\sum_{N\in\bN_+}N^{-c}\mu$, 
	where $\mu=\binom{N-1}{m-1}$ is the number of tuples $(i_1,\ldots,i_m)$ of positive integers that sum up to $N$.
	Since $\mu$ grows like $N^{m-1}$ when $N\ra\infty$, it follows that $A$ converges if $c>m$, namely if $b>\sum a_j+m$.
	Conversely, we show that $A$ diverges when $b=\sum a_j+m$.
	Since $b>a_m$, the general summand of $A$ is decreasing with respect to $i_{m}$ provided that $i_m\geq r(i_1+\cdots+i_{m-1})$, where $r=a_{m}/(b-a_{m})>0$.
	By the integral test, it suffices to show that the series
	\[
	A_1:=\sum_{i\in\bN_+^{m-1}} i_1^{a_1}\cdots i_{m-1}^{a_{m-1}}\int_{r(i_1+\cdots+i_{m-1})+1}^{\infty}\frac{x^{a_{m}}dx}{(i_1+\cdots+i_{m-1}+x)^b}
	\]
	diverges.
	After the change of variables $x=(i_1+\cdots+i_{m-1})t$,
	\[
	A_1
	=\sum\frac{i_1^{a_1}\cdots i_{m-1}^{a_{m-1}}}{(i_1+\cdots+i_{m-1})^{b-a_{m}-1}}\int_{r+\frac{1}{i_1+\cdots+i_{m-1}}}^{\infty}\frac{t^{a_{m}}dt}{(t+1)^b}\\
	\geq\sum\frac{i_1^{a_1}\cdots i_{m-1}^{a_{m-1}}}{(i_1+\cdots+i_{m-1})^{b-a_{m}-1}}\int_{r+1}^{\infty}\frac{t^{a_{m}}dt}{(t+1)^b}.
	\]
	By the induction hypothesis, $A_1$ diverges.
	So far, we have shown that $A<\infty$ if and only if (\ref{iff}) holds.
	If (\ref{iff}) holds, choosing $\e>0$ such that (\ref{iff}) still holds when $b$ is replaced by $b-\e$, then $A'$ converges because it is dominated by the convergent series
	\[
	\sum_{i\in\bN_+^m}\frac{i_1^{a_1}\cdots i_m^{a_m}}{(i_1+\cdots+i_m)^{b-\e}}.
	\]
	Clearly, the convergence of $A'$ implies the convergence of $A$.
	
	\textit{Case II: At least one $a_j$ is negative.}
	Without loss of generality suppose $a_m<0$.
	First assume that $A$ converges.
	Then, by the integral test, 
	\[
	A_2:=\sum_{i\in\bN_+^{m-1}} i_1^{a_1}\cdots i_{m-1}^{a_{m-1}}\int_{1}^{\infty}\frac{x^{a_{m}}dx}{(i_1+\cdots+i_{m-1}+x)^b}<\infty.
	\]
	After the change of variables $x=(i_1+\cdots+i_{m-1})t$,
	\[
	A_2
	=\sum\frac{i_1^{a_1}\cdots i_{m-1}^{a_{m-1}}}{(i_1+\cdots+i_{m-1})^{b-a_{m}-1}}\int_{\frac{1}{i_1+\cdots+i_{m-1}}}^{\infty}\frac{t^{a_{m}}dt}{(t+1)^b}.
	\]
	Since
	\[
	A_2\geq \sum\frac{i_1^{a_1}\cdots i_{m-1}^{a_{m-1}}}{(i_1+\cdots+i_{m-1})^{b-a_{m}-1}}\times\int_{1}^{\infty}\frac{t^{a_{m}}dt}{(t+1)^b},
	\]
	it follows that both 
	\[
	A_3
	:=\sum\frac{i_1^{a_1}\cdots i_{m-1}^{a_{m-1}}}{(i_1+\cdots+i_{m-1})^{b-a_{m}-1}}\quad\mathrm{and}\quad I:=\int_{1}^{\infty}\frac{t^{a_{m}}dt}{(t+1)^b}
	\]
	converge.
	The integral $I$ converges exactly when
	\begin{equation}
	b>a_{m}+1.\label{iff0}
	\end{equation}
	By the induction hypothesis and Case I, $A_3$ converges exactly when
	\begin{equation}
	b>\max\left\{a_{m}+1+\sum_{j\in J} (a_j+1):J\sub\{1,\ldots,m-1\}\right\}.\label{iff1}
	\end{equation}
	Therefore, the convergence of $A_2$ implies the convergence of 
	\[
	A_4:=\sum\frac{i_1^{a_1}\cdots i_{m-1}^{a_{m-1}}}{(i_1+\cdots+i_{m-1})^{b-a_{m}-1}}\int_{\frac{1}{i_1+\cdots+i_{m-1}}}^{1}\frac{t^{a_{m}}dt}{(t+1)^b}.
	\]
	Since $(t+1)^{-b}$ is bounded between $2^{-b}$ and $1$ when $(i_1+\cdots+i_{m-1})^{-1}\leq t\leq 1$, it follows that the convergence of $A_4$ is equivalent to the convergence of 
	\[
	A_5
	:=\sum\frac{i_1^{a_1}\cdots i_{m-1}^{a_{m-1}}}{(i_1+\cdots+i_{m-1})^{b-a_{m}-1}}\int_{\frac{1}{i_1+\cdots+i_{m-1}}}^{1}t^{a_{m}}dt.
	\]
	According to whether $a_{m}<-1$, $a_{m}=-1$, or $a_{m}>-1$, the convergence of $A_5$ is respectively equivalent to the convergence of the series:
	\[
	\sum\frac{i_1^{a_1}\cdots i_{m-1}^{a_{m-1}}}{(i_1+\cdots+i_{m-1})^{b}},
	\]
	\begin{equation*}
	\sum\frac{i_1^{a_1}\cdots i_{m-1}^{a_{m-1}}\log(i_1+\cdots+i_{m-1})}{(i_1+\cdots+i_{m-1})^{b}},\label{logg}
	\end{equation*}
	or
	\[
	\sum\frac{i_1^{a_1}\cdots i_{m-1}^{a_{m-1}}}{(i_1+\cdots+i_{m-1})^{b-a_{m}-1}}.
	\]
	By the induction hypothesis and Case I, it follows that the convergence of $A_2$ implies ((\ref{iff0}) and (\ref{iff1})) together with 
	\begin{equation}
	b>\max\left\{\sum_{j\in J} (a_j+1):J\sub\{1,\ldots,m-1\}\right\}\ \mathrm{whenever}\ a_{m}<-1.\label{iff2}
	\end{equation}
	Clearly, ((\ref{iff0}) $\mathrm{and}$ (\ref{iff1}) $\mathrm{and}$ (\ref{iff2})) is logically equivalent to (\ref{iff}).
	Conversely, assume that (\ref{iff}) holds.
	Reversing the arguments above shows that $A_2$ converges.
	Then, by the integral test,
	\[
	A_6:=\sum_{i\in\bN_+^m,i_m>1}\frac{i_1^{a_1}\cdots i_m^{a_m}}{(i_1+\cdots+i_m)^b}<\infty.
	\]
	Note that 
	\[
	A
	=
	A_6+
	\sum_{i\in\bN_+^{m-1}}\frac{i_1^{a_1}\cdots i_{m-1}^{a_{m-1}}}{(i_1+\cdots+i_{m-1}+1)^b}
	\leq A_6+\sum_{i\in\bN_+^{m-1}}\frac{i_1^{a_1}\cdots i_{m-1}^{a_{m-1}}}{C^b(i_1+\cdots+i_{m-1})^b},
	\]
	where $C$ equals $1$ if $b>0$, and equals $2$ otherwise.
	Since the second series converges by the induction hypothesis and Case I, it follows that $A<\infty$.
	We have shown that $A<\infty$ if and only if (\ref{iff}) holds.
	Similar to Case I, one can show that (\ref{iff}) is equivalent to $A'<\infty$.

	(b)
Note that  
	\[
	B=\sum_{i\in\bN_+^{m+1},\ N\in\bN_+}\frac{i_1^{a_1}\cdots i_m^{a_m}N^a\mu}{(i_1+\cdots+i_m+N)^b},
	\]
	where $\mu=\binom{N-1}{k-1}$ is the number of tuples $(i_{m+1},\ldots,i_{m+k})$ of positive integers that sum up to $N$.
	To study the convergence, one can replace $\mu$ by $N^{k-1}$, and the result follows from (a).
\end{proof}

During the proof of Theorem \ref{theorem2}, we will need to know exactly when each of the following higher zeta series (beside the ones appearing in Lemma \ref{fact-1}) converge:

%In order to determine the exact range of $p$ such that the Bergman module over the domain given in (\ref{egg2}) is $p$-essential normal, we need to know exactly when each of the higher zeta series of the following forms converge:
\begin{equation*}
\sum\frac{i_1^{a_1}i_2^{a_2}i_3^{a_3}(i_1+i_2)^{a_{12}}}{(i_1+i_2+i_3)^b},
\end{equation*}
\begin{equation*}
\sum\frac{i_1^{a_1}i_2^{a_2}i_3^{a_3}i_4^{a_4}(i_1+i_2+i_3)^{a_{123}}}{(i_1+i_2+i_3+i_4)^b},
\end{equation*}
\begin{equation*}
\sum\frac{i_1^{a_1}i_2^{a_2}i_3^{a_3}i_4^{a_4}(i_1+i_2+i_3)^{a_{123}}|i_1+i_2+i_3-i_4|^{a_{1234}}}{(i_1+i_2+i_3+i_4)^b},
\end{equation*}
\begin{equation*}
\sum\frac{i_1^{a_1}i_2^{a_2}i_3^{a_3}i_4^{a_4}i_5^{a_5}(i_1+i_2)^{a_{12}}(i_3+i_4)^{a_{34}}}{(i_1+i_2+i_3+i_4+i_5)^b}.
\end{equation*}
Here, all parameters $b,a_1,a_2,\ldots$ are real numbers.
The following two lemmas solve this problem.
Their proof is modeled on the arguments in \cite[Lemma 2]{tornheim}.

\begin{lemma}\label{fact-2}
Let $\cL$ be a finite collection of linear functionals $l=l(i_1,\ldots,i_m):\bN_+^m\ra\bR$ with nonnegative coefficients $\pa l/\pa i_j$, $j=1,\ldots,m$.
Suppose that to each $l\in\cL$, there associates a real number $a$.
Let $b$ be a real number.
If the infinite series 
\[
S:=\sum_{i\in\bN_+^m}\frac{\prod_{l\in\cL} l(i_1,\ldots,i_m)^a}{(i_1+\cdots+i_m)^b}
\] 
converges, then
\begin{equation}
b>\max\left\{|J|+\sum_{J\sim l,\ l\in\cL} a \ :\ \emptyset\neq J\sub\{1,\ldots,m\}\right\},\label{iff-b}
\end{equation}
where $|J|$ is the cardinality of $J$, and the notation $J\sim l$ indicates that $\pa l/\pa i_j\neq 0$ for some $j\in J$.
The same is true if one $l(i_1,\ldots,i_m)$ is replaced by $|-i_1+i_2+\cdots+i_m|$.
\end{lemma}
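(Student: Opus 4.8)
The plan is to prove only the asserted necessity, following the box-counting scheme of \cite[Lemma 2]{tornheim}. We may assume every term of $S$ is a well-defined positive real number; otherwise $S=+\infty$ and there is nothing to prove. (In particular, every ordinary linear $l\in\cL$ then satisfies $l\ge l(1,\ldots,1)>0$ throughout $\bN_+^m$.) It suffices to fix one nonempty $J\sub\{1,\ldots,m\}$ and show that convergence of $S$ forces
\[
b>|J|+\sigma_J,\qquad\text{where }\ \sigma_J:=\sum_{l\in\cL,\ J\sim l}a.
\]
I would get this by bounding $S$ from below by the contribution of a family of pairwise disjoint ``boxes'' $R_N$, indexed by the dyadic scales $N=2^{k}$, on each of which every quantity entering $S$ has a controlled size.

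To set up the boxes, write $r=|J|$ and fix a large constant $A=A(m)\ge1$ (say $A>4m$). Set $\lambda_j=1$ for all $j$, except that if $\cL$ contains the exceptional functional $l_0:=|-i_1+i_2+\cdots+i_m|$ and $1\in J$, then set $\lambda_1=A$. For $N=2^{k}$, $k\in\bN$, let
\[
R_N:=\bigl\{\,i\in\bN_+^m:\ \lambda_jN\le i_j<2\lambda_jN\ \text{for }j\in J,\ \ i_j=1\ \text{for }j\notin J\,\bigr\}.
\]
Because the intervals are half-open with endpoints doubling in $k$, the blocks $R_{2^{k}}$ are pairwise disjoint, and plainly $\#R_N\asymp N^{r}$ with implied constants depending only on $m$. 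On $R_N$ one has $i_1+\cdots+i_m\asymp N$, and any $l\in\cL$ with $J\not\sim l$ depends only on the coordinates $i_j$ with $j\notin J$, hence equals a fixed positive constant there (using $l\not\equiv0$).

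The key local estimate is that every $l\in\cL$ with $J\sim l$ satisfies $l\asymp N$ on $R_N$. For an ordinary linear $l=c_0+\sum_jc_ji_j$ with $c_j\ge0$ this is immediate, since some $c_j>0$ with $j\in J$ gives $l\ge c_j\lambda_jN$, while $l\le c_0+2AN\sum_jc_j+\sum_{j\notin J}c_j$. The one point that requires the asymmetric scales is the exceptional $l_0$: on $R_N$,
\[
-i_1+i_2+\cdots+i_m=-i_1+\sum_{j\in J\setminus\{1\}}i_j+\#\{j:\ 2\le j\le m,\ j\notin J\},
\]
so if $1\in J$ (whence $i_1\ge AN$ while $0\le\sum_{j\in J\setminus\{1\}}i_j<2rN$) the right side is, for $A>4m$ and $N$ large, negative and of absolute value $\asymp N$, while if $1\notin J$ then $i_1=1$ and $\sum_{j\in J}i_j\ge rN$, so it is positive and $\asymp N$. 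Either way $l_0\asymp N$ on $R_N$.

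Collecting these estimates, the contribution of $R_N$ to $S$ is
\[
\sum_{i\in R_N}\frac{\prod_{l\in\cL}l(i)^{a}}{(i_1+\cdots+i_m)^{b}}\ \asymp\ N^{r}\cdot N^{\sigma_J}\cdot N^{-b}\ =\ N^{r+\sigma_J-b},
\]
uniformly over the scale. Since the blocks are disjoint, $S\ge c\sum_{k\ge k_0}2^{k(r+\sigma_J-b)}$ for suitable $c>0$ and $k_0$, so convergence of $S$ forces $r+\sigma_J-b<0$, that is, (\ref{iff-b}) for this $J$; as $J$ was an arbitrary nonempty subset, (\ref{iff-b}) follows, and the whole argument is unchanged if one $l$ is the functional $|-i_1+i_2+\cdots+i_m|$. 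The main obstacle is exactly the treatment of this non-monotone functional: on a symmetric box it may be far smaller than $N$, which would only yield a weaker inequality, and the fix is to inflate the scale of the variable it carries with a minus sign so that cancellation cannot occur. (As a sanity check, $\cL=\{i_1,\ldots,i_m\}$ recovers the necessity half of Lemma~\ref{fact-1}(a), where the bound is in fact sharp.)
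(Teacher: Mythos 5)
Your proof is correct, but it takes a genuinely different route from the paper. The paper proves the necessity by induction on $m$: it splits the lattice into the slice $i_1\geq 1=i_2=\cdots=i_m$ and the cone $i_1\geq i_2+\cdots+i_m$ (replaced by $i_1\geq 3(i_2+\cdots+i_m)$ when the exceptional functional $|-i_1+i_2+\cdots+i_m|$ is present), uses the comparability of $i_1$ with $i_1+\cdots+i_m$ on that cone together with an integral comparison to peel off one variable, and invokes the induction hypothesis to produce the contributions $b>b_1$, $b>b_1+b_{12}$, $b>b_2$. You instead fix a single nonempty $J$ and bound $S$ from below by disjoint dyadic boxes $R_N$ on which every relevant quantity is of size $\asymp N$, so that the contribution of scale $N$ is $\asymp N^{|J|+\sigma_J-b}$ and convergence forces $b>|J|+\sigma_J$ directly, with no induction and no integral test; your asymmetric inflation $\lambda_1=A$ of the box in the $i_1$-direction plays exactly the role of the paper's factor-$3$ cone modification, preventing cancellation in the non-monotone functional. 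Your argument is arguably more transparent and handles all subsets $J$ uniformly, while the paper's inductive decomposition is organized so that the same regions also serve in the sufficiency arguments of Lemma \ref{fact-3}. One small caveat: your opening reduction ("otherwise $S=+\infty$") is overstated, since a term can vanish (e.g.\ on the zero set of $|-i_1+i_2+\cdots+i_m|$ when its exponent is positive) without $S$ diverging; but this does not affect your proof, because your boxes avoid these degeneracies and you only need a lower bound on $S$. The only nondegeneracy genuinely required is that no ordinary $l\in\cL$ be identically zero, an implicit assumption shared with the paper's own proof.
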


\begin{proof}
We argue by induction on $m$.
Since the case $m=1$ is clear, assume $m>1$. 
Note that (\ref{iff-b}) can be written as
\begin{equation}
b>\max\{b_1,b_1+b_{12},b_2\},\label{iff-b'}
\end{equation}
where
\[
b_1:=1+\sum_{\{1\}\sim l}a,
\]
\[
b_{12}:=\max\left\{|J|+\sum_{J\sim l,\ \{1\}\not\sim l}a\ :\   \emptyset\neq J\sub\{2,\ldots,m\} \right\},
\]
\[
b_{2}:=\max\left\{|J|+\sum_{J\sim l} a\ :\ \emptyset\neq J\sub\{2,\ldots,m\}\right\}.
\]

Define the function
\begin{equation}
\chi:\bR\ra\bR,\quad 
\chi(x)
=\begin{cases}
1,&\text{if}\ x\geq 0,\\
2,&\text{if}\ x<0.\label{charfunction}
\end{cases}
\end{equation}

The convergence of $S$ implies the convergence of 
\begin{equation}
\sum_{i_1\geq 1=i_2=\cdots=i_m}\frac{\prod l(i_1,\ldots,i_m)^a}{(i_1+\cdots+i_m)^b}\label{first}
\end{equation}
as well as 
\begin{equation}
\sum_{i_1\geq i_2+\cdots+i_m}\frac{\prod l(i_1,\ldots,i_m)^a}{(i_1+\cdots+i_m)^b}\label{second}
\end{equation}
The convergence of the series (\ref{first}) implies $b>b_1$.
Note that on the region $i_1\geq i_2+\cdots+i_m$, two quantities $i_1$ and $i_1+\cdots+i_m$ are comparable in the sense that 
\begin{equation}
\frac{i_1+\cdots+i_m}{2}\leq i_1\leq i_1+\cdots+i_m.\label{estimate}
\end{equation}
Since $b>b_1$, this shows that the series (\ref{second}) dominates
\begin{multline*}
\sum_{i_1\geq i_2+\cdots+i_m}\frac{\prod l\left(\frac{i_1+\cdots+i_m}{\chi(-a)},i_2,\ldots,i_m\right)^a}{(i_1+\cdots+i_m)^b}\gtrsim
\sum_{i_1\geq i_2+\cdots+i_m}\frac{\prod_{\{1\}\not\sim l} l^a}{(i_1+\cdots+i_m)^{b-b_1+1}}
\\
\geq
\sum_{i_2,\ldots,i_m\geq 1}\int_{i_2+\cdots+i_m}^{\infty}\frac{\prod_{\{1\}\not\sim l} l^a}{(i_1+\cdots+i_m)^{b-b_1+1}}di_1
\cong
\sum \frac{\prod_{\{1\}\not\sim l} l^a}{(i_2+\cdots+i_m)^{b-b_1}}.
\end{multline*}
Therefore, by the induction hypothesis, $b-b_1>b_{12}$.
We have shown that part of (\ref{iff-b}) which contains the contribution from $1\in J$.
Similar arguments (namely, summing over each of the regions $i_j\geq\sum_{k\neq j} i_k$, $j=2,\ldots,m$ instead of $i_1\geq\sum_{k=2}^m i_k$) imply $b>b_2$.

Finally, assume that one $l(i_1,\ldots,i_m)$ is of the form $|-i_1+i_2+\cdots+i_m|$.
The proof above works after making the following modifications: 
\begin{enumerate}
\item 
In the proof of $b>b_1+b_{12}$, replace the region $i_1\geq i_2+\cdots+i_m$ with $i_1\geq 3(i_2+\cdots+i_m)$, and the comparability estimate (\ref{estimate}) with
\[
\frac{i_1+3(i_2+\cdots+i_m)}{2}\leq i_1\leq i_1+2(i_2+\cdots+i_m).
\]

\item
In the proof of $b>b_2$, for each $j=2,\ldots,m$, replace the region $i_j\geq\sum_{k\neq j}i_k$ with $i_j\geq 3\sum_{k\neq j}i_k$, and the comparability estimate (\ref{estimate}) with
\[
\frac{i_j+3\sum_{k\neq j}i_k}{2}\leq i_j\leq i_j+2\sum_{k\neq j}i_k.
\]
\end{enumerate}
The proof is complete.
\end{proof}

\begin{lemma}\label{fact-3}
The converse of Lemma \ref{fact-2} holds if $S$ is any of the following series:

(a)
\begin{equation*}
A:=
	\sum_{i\in\bN^3_+}\frac{i_1^{a_1}i_2^{a_2}i_3^{a_3}(i_1+i_2)^{a_{12}}}{(i_1+i_2+i_3)^b}.\label{A}
	\end{equation*}

(b)
	\begin{equation*}
	B:=
	\sum_{i\in\bN^4_+}\frac{i_1^{a_1}i_2^{a_2}i_3^{a_3}i_4^{a_4}(i_1+i_2)^{a_{12}}}{(i_1+i_2+i_3+i_4)^b}.\label{B}
	\end{equation*}
	
(c)
	\begin{equation*}
	C:=
	\sum_{i\in\bN^4_+}\frac{i_1^{a_1}i_2^{a_2}i_3^{a_3}i_4^{a_4}(i_1+i_2+i_3)^{a_{123}}}{(i_1+i_2+i_3+i_4)^b}.\label{C}
	\end{equation*}
	
(d)
	\begin{equation*}
	D:=
	\sum_{i\in\bN^4_+}\frac{i_1^{a_1}i_2^{a_2}i_3^{a_3}i_4^{a_4}(i_1+i_2+i_3)^{a_{123}}|i_1+i_2+i_3-i_4|^{a_{1234}}}{(i_1+i_2+i_3+i_4)^b},\quad a_{1234}>0.
	\label{D}
	\end{equation*}
	
(e)
	\begin{equation*}
	E:=
	\sum_{i\in\bN^4_+}\frac{i_1^{a_1}i_2^{a_2}i_3^{a_3}i_4^{a_4}(i_1+i_2)^{a_{12}}(i_3+i_4)^{a_{34}}}{(i_1+i_2+i_3+i_4)^b},\quad a_1+a_{12}>-1.\label{E}
	\end{equation*}

(f)
\begin{equation*}
	F:=
	\sum_{i\in\bN^5_+}\frac{i_1^{a_1}i_2^{a_2}i_3^{a_3}i_4^{a_4}i_5^{a_5}(i_1+i_2)^{a_{12}}(i_3+i_4)^{a_{34}}}{(i_1+i_2+i_3+i_4+i_5)^b},\quad a_1+a_{12}>-1.\label{F}
	\end{equation*}
\end{lemma}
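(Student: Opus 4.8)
The plan is to establish, for each series (a)--(f), the implication left open by Lemma \ref{fact-2}: that the inequality (\ref{iff-b}) forces convergence. The device I would use throughout is \emph{contraction of an internal block}. If $l(i)=i_{j_1}+\cdots+i_{j_s}$ is one of the partial-sum functionals occurring in the numerator, I replace the $s$ variables $i_{j_1},\ldots,i_{j_s}$ by a single new index $n=i_{j_1}+\cdots+i_{j_s}$; summing over all $(i_{j_1},\ldots,i_{j_s})\in\bN_+^s$ with this fixed sum turns the block into the multiplicity $\mu_l(n):=\sum_{i_{j_1}+\cdots+i_{j_s}=n}i_{j_1}^{a_{j_1}}\cdots i_{j_s}^{a_{j_s}}$, while the factor $l(i)^{a_l}$ becomes $n^{a_l}$. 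The preliminary estimate I need is $\mu_l(n)\asymp n^{\gamma_l}$ up to a factor $O(\log^{\,s-1}n)$, where $\gamma_l:=\max\{\,|S|-1+\sum_{k\in S}a_{j_k}:\emptyset\neq S\subseteq\{1,\ldots,s\}\,\}$; this follows by splitting $\bN_+^s$ into its $s!$ ordering cones and, on each, summing out all but the largest variable by the elementary partial-sum estimate $\sum_{t=1}^{N}t^c\asymp N^{c+1}$ if $c>-1$, $\asymp\log N$ if $c=-1$, $\asymp 1$ if $c<-1$ (the logarithms arising only at borderline exponents, and needing the function $\chi$ of (\ref{charfunction}) to keep track of signs when passing to monomials).

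After contraction the exponent of the new index is $a_l+\gamma_l$, so: in (a) I contract $i_1+i_2$ and land on the $m=2$ case of Lemma \ref{fact-1}(a); in (b) I contract $i_1+i_2$ and land on the $m=3$ case; in (c) I contract $i_1+i_2+i_3$ and land on the $m=2$ case; in (e) I contract \emph{both} blocks $i_1+i_2$ and $i_3+i_4$ and land on the $m=2$ case; in (f) I contract both and land on the $m=3$ case. The logarithmic factors are harmless: since (\ref{iff-b}) is an open condition, I may replace each contracted exponent $a_l+\gamma_l$ by $a_l+\gamma_l+\e$ for $\e$ small, which still satisfies the hypothesis and dominates the log powers, so the resulting Lemma \ref{fact-1}(a) series converges and $S$ with it. Case (d) I reduce to (c): since $a_{1234}>0$ and $|i_1+i_2+i_3-i_4|\leq i_1+i_2+i_3+i_4$, the series $D$ is dominated by $C$ with $b$ replaced by $b-a_{1234}$, and because the difference functional depends on every variable the threshold (\ref{iff-b}) for $D$ is exactly the threshold for that shifted $C$, so part (c) finishes it. The standing hypotheses $a_1+a_{12}>-1$ in (e) and (f) are used precisely to keep the $i_1+i_2$-block multiplicity in a clean polynomial regime (at most one extra logarithm) when its variables are summed out, so that the contracted exponent is the one predicted by (\ref{iff-b}).

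The step that actually needs care is verifying that the convergence criterion delivered by Lemma \ref{fact-1}(a) for the contracted series coincides with (\ref{iff-b}) for the original one. This is a purely combinatorial check: writing $a_l+\gamma_l$ back as a maximum over the nonempty subsets $S$ of the block, and using that each functional's contribution to a threshold is additive over disjoint index sets, one sees that for every subset $J'$ of the variables of the contracted series the threshold entry $|J'|+\sum_{J'\sim l'}a$ equals $\max_J\{\,|J|+\sum_{J\sim l}a\,\}$ taken over all $J$ in the original index set whose restriction outside the contracted blocks equals that of $J'$; taking the overall maxima on both sides gives the equivalence. Necessity being already known, I never need the divergence half of Lemma \ref{fact-1}(a) here.

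The main obstacle is the two-sided block estimate $\mu_l(n)\asymp n^{\gamma_l}$ with the correct power of $\log n$, and the attendant case analysis when several of the block exponents are $\leq-1$, so that $\gamma_l$ is attained at a proper nonempty $S$ rather than at the full block: one has to make sure each such borderline regime still matches an entry of (\ref{iff-b}) and that the accumulated logarithmic power never exceeds the depth of the iterated summation. The remaining work --- the $\chi$-bookkeeping for signs, and the $\e$-perturbation absorbing the logs --- is routine and affects no threshold.
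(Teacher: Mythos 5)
Your argument is correct, but it is a genuinely different route from the paper's. The paper proves each of (a), (b), (c), (f) by splitting the sum into the regions where the variable \emph{outside} the block dominates ($i_3\geq i_1+i_2$, resp. $i_4\geq i_1+i_2+i_3$, resp. $i_5\geq i_1+\cdots+i_4$), equals $1$, or is sandwiched ($1<i_3\leq i_1+i_2$, etc.), estimating each piece by monotonicity and the integral test (with the sign-tracking function $\chi$) and reducing to Lemma \ref{fact-1}(a) or to an earlier part; case (e) is handled by a dichotomy on the sign of $a_{12}$, and it is exactly in the subcase $a_{12}<0$ that the hypothesis $a_1+a_{12}>-1$ is invoked to bound $i_1^{a_1}(i_1+i_2)^{a_{12}}\leq i_1^{a_1+a_{12}}$ and integrate in $i_1$; case (d) is reduced to (c) exactly as you do. Your block-contraction scheme instead sums out each block $i_{j_1}+\cdots+i_{j_s}=n$ into a weighted lattice-point count $\mu_l(n)\lesssim n^{\gamma_l}\log^{s-1}(n+1)$ (only the upper bound is needed, and it holds by the ordering-cone argument you sketch), absorbs the logarithms by an $\e$-perturbation permitted by the strictness of (\ref{iff-b}), and reduces every case directly to Lemma \ref{fact-1}(a); the threshold bookkeeping you describe does match (\ref{iff-b}) precisely because in (a), (b), (c), (e), (f) the non-singleton functionals are pairwise disjoint blocks, so the maximum in (\ref{iff-b}) factors over blocks. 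This is in effect a weighted extension of the device the paper already uses in the proof of Lemma \ref{fact-1}(b), where the unweighted multiplicity $\binom{N-1}{k-1}$ is replaced by $N^{k-1}$. What your route buys is uniformity (one mechanism for all five series) and, notably, it does not use the standing hypotheses $a_1+a_{12}>-1$ in (e) and (f) at all --- your parenthetical explanation that this hypothesis keeps the block multiplicity polynomial is not actually needed, since $\mu_l(n)\lesssim n^{\gamma_l}\log(n+1)$ holds unconditionally for a block of size two --- so your argument proves these cases in slightly greater generality, consistent with the author's remark conjecturing the converse of Lemma \ref{fact-2} in general; what the paper's route buys is that it never needs the two-sided structure of the block counts and works directly with the given sums, at the price of the case analysis and the extra hypothesis in (e), (f).
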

\begin{proof}
(a)
We should show that $A$ converges assuming
\begin{equation}
b>\max\{b_1,b_2,b_1+b_{2}\},\label{m3}
\end{equation}
where
\[
b_1:=a_3+1,
\]
\[
b_2:=\max\{a_1+a_{12}+1,a_2+a_{12}+1,a_1+a_2+a_{12}+2\}.
\]
Since
\[
A\leq A_1+A'_2+A_2'',
\]
\[
A_1:=\sum_{i_3\geq i_1+i_2}\frac{i_1^{a_1}i_2^{a_2}i_3^{a_3}(i_1+i_2)^{a_{12}}}{(i_1+i_2+i_3)^b},
\]
\[
A'_2:=\sum_{1=i_3\leq i_1+i_2}\frac{i_1^{a_1}i_2^{a_2}i_3^{a_3}(i_1+i_2)^{a_{12}}}{(i_1+i_2+i_3)^b},
\]
\[
A_2'':=\sum_{1<i_3\leq i_1+i_2}\frac{i_1^{a_1}i_2^{a_2}i_3^{a_3}(i_1+i_2)^{a_{12}}}{(i_1+i_2+i_3)^b},
\]
it suffices to show that $A_1$, $A_2'$ and $A''_2$ converge.
Since $b>b_1$,
\begin{multline*}
A_1
\leq
\sum_{i_3\geq i_1+i_2}\frac{i_1^{a_1}i_2^{a_2}(i_1+i_2)^{a_{12}}\left(\frac{i_1+i_2+i_3}{\chi(a_{3})}\right)^{a_{3}}}{(i_1+i_2+i_3)^b}
\leq\sum_{i_1,i_2\geq 1}\int_{i_1+i_2-1}^{\infty}\frac{i_1^{a_1}i_2^{a_2}(i_1+i_2)^{a_{12}}}{(i_1+i_2+i_3)^{b-b_1+1}}di_3\\
\lesssim\sum \frac{i_1^{a_1}i_2^{a_2}}{(i_1+i_2)^{b-b_1-a_{12}}}.
\end{multline*}
Therefore, $A_1<\infty$ according to Lemma \ref{fact-1}.(a) and because $b>b_1+b_2$.
We have $A'_2<\infty$ because $b>b_2$.
%It remains to show that $C''_2<\infty$.
The series $A''_2$ is dominated by 
\begin{equation*}
\sum_{1<i_3\leq i_1+i_2}\frac{i_1^{a_1}i_2^{a_2}(i_1+i_2)^{a_{12}}i_3^{a_3}}{\left(\chi(b)(i_1+i_2)\right)^b}
\lesssim
\sum_{i_1,i_2\geq 1}\int_{1}^{i_1+i_2+1}\frac{i_1^{a_1}i_2^{a_2}i_3^{a_3}}{(i_1+i_2)^{b-a_{12}}}di_3.
\end{equation*}
Therefore, if $a_{3}>-1$, then $A''_2$ is dominated by 
\[
\sum\frac{i_1^{a_1}i_2^{a_2}}{(i_1+i_2)^{b-a_{12}-b_1}},
\]
and this latter series converges according to Lemma \ref{fact-1}.(a) and because $b>b_1+b_2$.
If $a_3<-1$, then $A_2''$ is dominated by 
\[
\sum\frac{i_1^{a_1}i_2^{a_2}}{(i_1+i_2)^{b-a_{12}}},
\]
and this latter series converges according to Lemma \ref{fact-1}.(a) and because $b>b_2$.
If $a_3=-1$, then $A''_2$ is dominated by
\[
\sum\frac{i_1^{a_1}i_2^{a_2}\log(i_1+i_2)}{(i_1+i_2)^{b-a_{12}}}
\lesssim
\sum\frac{i_1^{a_1}i_2^{a_2}}{(i_1+i_2)^{b-a_{12}-\e}},
\]
where we have chosen $\e>0$ small enough such that (\ref{m3}) holds for $b$ replaced by $b-\e$.
This latter series is finite 
according to Lemma \ref{fact-1}.(a) and because $b-\e>b_2$.

(b, c)
The series $B$ and $C$ are treated similar to $A$, now summing over regions $i_4\geq i_1+i_2+i_3$, $1=i_4\leq i_1+i_2+i_3$, and $1<i_4\leq i_1+i_2+i_3$.

(d)
Since $a_{1234}>0$, the series $D$ is dominated by
\begin{equation*}
\sum\frac{i_1^{a_1}i_2^{a_2}i_3^{a_3}i_4^{a_4}(i_1+i_2+i_3)^{a_{123}}}{(i_1+i_2+i_3+i_4)^{b-a_{1234}}},
\end{equation*}
and we are reduced to (c).

(e)
Assume that the condition (\ref{iff-b}) holds for the series $E$.
If $a_{12}\geq 0$, then, by Lemma \ref{fact-sumoftwoseries}, it suffices to show that both of the following series converge:
\[
\sum\frac{i_1^{a_1+a_{12}}i_2^{a_2}i_3^{a_3}i_4^{a_4}(i_3+i_4)^{a_{34}}}{(i_1+i_2+i_3+i_4)^b},
\]
\[
\sum\frac{i_1^{a_1}i_2^{a_2+a_{12}}i_3^{a_3}i_4^{a_4}(i_3+i_4)^{a_{34}}}{(i_1+i_2+i_3+i_4)^b}.
\]
Both of these series converge by (b).
Now, assume $a_{12}< 0$.
Since
\[
E\leq E_1+E'_2+E_2'',
\]
\[
E_1
:=\sum_{i_1\geq i_2+i_3+i_4}\frac{i_1^{a_1}i_2^{a_2}i_3^{a_3}i_4^{a_4}(i_1+i_2)^{a_{12}}(i_3+i_4)^{a_{34}}}{(i_1+i_2+i_3+i_4)^b},
\]
\[
E'_2
:=\sum_{1=i_1\leq i_2+i_3+i_4}\frac{i_1^{a_1}i_2^{a_2}i_3^{a_3}i_4^{a_4}(i_1+i_2)^{a_{12}}(i_3+i_4)^{a_{34}}}{(i_1+i_2+i_3+i_4)^b},
\]\[
E_2'':=\sum_{1<i_1\leq i_2+i_3+i_4}\frac{i_1^{a_1}i_2^{a_2}i_3^{a_3}i_4^{a_4}(i_1+i_2)^{a_{12}}(i_3+i_4)^{a_{34}}}{(i_1+i_2+i_3+i_4)^b},
\]
it suffices to show that $E_1$, $E_2'$ and $E''_2$ converge.
That $E_1$ and $E'_2$ converge can be proved by similar arguments given in (a) for the convergence of $A_1$ and $A'_2$, respectively.
Since $a_{12}<0$ and $a_{1}+a_{12}>-1$, $E''_2$ is dominated by 
\begin{multline*}
\sum_{1<i_1\leq i_2+i_3+i_4}\frac{i_1^{a_1+a_{12}}i_2^{a_2}i_3^{a_3}i_4^{a_4}(i_3+i_4)^{a_{34}}}{(i_1+i_2+i_3+i_4)^{b}}
\lesssim
\sum_{1<i_1\leq i_2+i_3+i_4}\frac{i_1^{a_1+a_{12}}i_2^{a_2}i_3^{a_3}i_4^{a_4}(i_3+i_4)^{a_{34}}}{(\chi(b)(i_2+i_3+i_4))^{b}}\\
\lesssim
\sum_{1<i_1\leq i_2+i_3+i_4}\int_{1}^{i_2+i_3+i_4+1}\frac{i_1^{a_1+a_{12}}i_2^{a_2}i_3^{a_3}i_4^{a_4}(i_3+i_4)^{a_{34}}}{(i_2+i_3+i_4)^{b}}di_1
\lesssim
\sum_{i_2,i_3,i_4\geq 1}\frac{i_2^{a_2}i_3^{a_3}i_4^{a_4}(i_3+i_4)^{a_{34}}}{(i_2+i_3+i_4)^{b-a_1-a_{12}-1}}.
\end{multline*}
The last series converges by (b).

(f)
Arguing as in (a), namely summing over regions $i_5\geq i_1+i_2+i_3+i_4$, $1=i_5\leq i_1+i_2+i_3+i_4$, and $1<i_5\leq i_1+i_2+i_3+i_4$, reduces us to (e).
\end{proof}

\begin{remark}
The author guesses that the converse of Lemma \ref{fact-2} is true in general, but he has not yet been able to prove this.
Lemma \ref{fact-3} is enough to prove Theorem \ref{theorem2}.
\end{remark}

\section{Proof of Theorem \ref{theorem1}}\label{section-theorem1}
This section proves Theorem \ref{theorem1} about the $p$-essential normality of the Bergman module $L^2_a(\Om_1)$ over the domain $\Om_1$ given in (\ref{egg1}).
Since $\Om_1$ is a complete Reinhardt domain, polynomials are dense in $L^2_a(\Om_1)$ with respect to the topology of uniform convergence on compact subsets \cite[Page 47]{range}.
Then a standard shrinking argument (\cite[Page 43]{zhu-FT}, \cite[Page 11]{ds}) shows that the normalized monomials
\[
b_i:=\frac{z^i}{\sqrt{\om_1(i)}},\quad
i\in\bN^m,
\]
where
\[
\om_1(i):=\left\|z^i\right\|^2_{L^2_a(\Om_1)},
\]
constitute an orthonormal basis for the Hilbert space $L^2_a(\Om_1)$.
An explicit formula for the norm of monomials is given by:

\begin{proposition}\label{lemma-norm-1}
Given multi-index $i\in\bN^m$, we have
	\[
	\om_1(i)
	=\frac{\pi^m}{\prod p_j}\frac{B\left(\frac{i+1}{p}\right)}{\left|\frac{i+1}{p}\right|},
	\]
	where $\frac{i+1}{p}=\left(\frac{i_1+1}{p_1},\ldots,\frac{i_m+1}{p_m}\right)$, 
	$\left|\frac{i+1}{p}\right|=\sum_{j=1}^{m}\frac{i_j+1}{p_j}$, and 
	\[
	B\left(\frac{i+1}{p}\right)
	=\frac{\prod_{j=1}^m\Gamma\left(\frac{i_j+1}{p_j}\right)}{\Gamma\left(\sum_{j=1}^m\frac{i_j+1}{p_j}\right)}
	\]
	is the multi-variable Beta function.
\end{proposition}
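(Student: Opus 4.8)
The plan is to compute $\om_1(i)=\int_{\Om_1}\abs{z^i}^2\,dV(z)$ directly, where $dV$ denotes Lebesgue measure on $\bC^m\cong\bR^{2m}$, by passing to polar coordinates and reducing to a classical Dirichlet integral over a simplex. Since $\Om_1$ is bounded and $c_j:=(i_j+1)/p_j>0$ for all $j$, all the integrals below converge absolutely, so every manipulation is justified by Tonelli's theorem.

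First I would write $z_j=r_je^{\sqrt{-1}\,\theta_j}$ with $r_j\geq 0$ and $\theta_j\in[0,2\pi)$, so that $dV(z)=\prod_{j=1}^m r_j\,dr_j\,d\theta_j$ and $\abs{z^i}^2=\prod_j r_j^{2i_j}$. The integrand is independent of the angular variables and $\Om_1$ is a complete Reinhardt domain (its defining condition involves only the $\abs{z_j}$), so the angular integration contributes a factor $(2\pi)^m$, leaving
\[
\om_1(i)=(2\pi)^m\int_{\left\{r\in(0,\infty)^m:\ \sum_j r_j^{2p_j}<1\right\}}\ \prod_{j=1}^m r_j^{2i_j+1}\,dr.
\]
Next I would perform the substitution $t_j=r_j^{2p_j}$ in each variable, whose Jacobian gives $dr_j=\tfrac{1}{2p_j}t_j^{1/(2p_j)-1}\,dt_j$; a one-line computation of exponents yields $r_j^{2i_j+1}\,dr_j=\tfrac{1}{2p_j}\,t_j^{(i_j+1)/p_j-1}\,dt_j$, and the region $\{\sum_j r_j^{2p_j}<1\}$ becomes the open unit simplex $\Delta=\{t\in(0,\infty)^m:\sum_j t_j<1\}$. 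Since $(2\pi)^m\prod_j\tfrac{1}{2p_j}=\pi^m/\prod_j p_j$, this gives
\[
\om_1(i)=\frac{\pi^m}{\prod_j p_j}\int_{\Delta}\ \prod_{j=1}^m t_j^{c_j-1}\,dt,\qquad c_j=\frac{i_j+1}{p_j}.
\]

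Finally I would invoke the Dirichlet integral $\int_{\Delta}\prod_{j=1}^m t_j^{c_j-1}\,dt=\prod_{j=1}^m\Gamma(c_j)\big/\Gamma\!\big(1+\sum_j c_j\big)$, and use $\Gamma(1+x)=x\,\Gamma(x)$ to rewrite the right-hand side as $\tfrac{1}{\sum_j c_j}\cdot\prod_j\Gamma(c_j)\big/\Gamma(\sum_j c_j)=B(c)/\abs{c}$ with $c=(c_1,\dots,c_m)=\tfrac{i+1}{p}$. Substituting back produces exactly the asserted formula. The whole argument is elementary; the only place needing a little care is tracking the exponents and the constant $\pi^m/\prod p_j$ through the substitution, and (if one wants a self-contained account) recording the Dirichlet formula, which follows by induction on $m$ from the one-variable Beta integral $\int_0^1 s^{a-1}(1-s)^{b-1}\,ds=\Gamma(a)\Gamma(b)/\Gamma(a+b)$ after peeling off the last variable and rescaling $t_m=(1-\sum_{j<m}t_j)s$.
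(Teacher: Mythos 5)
Your computation is correct: the exponent bookkeeping under $t_j=r_j^{2p_j}$ is right (indeed $r_j^{2i_j+1}\,dr_j=\tfrac{1}{2p_j}t_j^{(i_j+1)/p_j-1}\,dt_j$), the constant $(2\pi)^m\prod_j\tfrac{1}{2p_j}=\pi^m/\prod_j p_j$ is right, and the Dirichlet integral over the simplex together with $\Gamma(1+x)=x\Gamma(x)$ gives exactly $B\bigl(\tfrac{i+1}{p}\bigr)/\bigl|\tfrac{i+1}{p}\bigr|$. Note, though, that the paper does not prove this proposition at all: it simply cites D'Angelo's explicit Bergman kernel computation and the companion paper in preparation. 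So your argument is not so much a different route as the self-contained version of what those references contain -- polar coordinates on a complete Reinhardt domain, reduction to a Dirichlet-type integral, and the Beta/Gamma identity. What your write-up buys is that the formula is verified in a few elementary lines without sending the reader to the literature; if it were to be inserted into the paper, the only thing worth adding explicitly is the one-line justification of the angular factor (the integrand $\prod_j r_j^{2i_j}$ has no angular dependence and the domain is Reinhardt), which you already state, and the standard Dirichlet formula you correctly indicate can be proved by induction from the one-variable Beta integral.
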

\begin{proof}
\cite{dangelo} or \cite{jabbari-egg-index}.
\end{proof}

Theorem \ref{theorem1} follows immediately from:

\begin{proposition}\label{proposition-1}
For each coordinate function $f=z_j$, let $M_{f}:L^2_{a}(\Om_1)\ra L^2_{a}(\Om_1)$ be the multiplication by $f$.
Then:

(a)
Given $i\in\bN^{m}$, we have
	\begin{align}
	\left[M_{z_1},M_{z_1}^*\right]\left(b_i\right)
	=\lambda b_i,\label{self-1}\quad
	\end{align}
	\begin{equation}
	\sqrt{\left[M_{z_2},M_{z_1}^*\right]\left[M_{z_2},M_{z_1}^*\right]^*}\left(b_i\right)
	=\mu b_i,\label{cross-1}
	\end{equation}
where
	\[
	\lambda
	=\frac{\omega(i_1,i_2,\ldots,i_m)}{\omega(i_1-1,i_2,\ldots, i_m)}\delta(i_1)
	-\frac{\omega(i_1+1,i_2,\ldots,i_m)}{\omega(i_1,i_2,\ldots, i_m)},
	\]
\[
\mu
=
\Bigg|\sqrt{\frac{\om(i_1,\ldots,i_m)\omega(i_1+1,i_2-1,i_3,\ldots,i_m)}{\omega(i_1,i_2-1,i_3,\ldots, i_m)^2}}-
\sqrt{\frac{\omega(i_1+1,i_2,\ldots,i_m)^2}{\omega(i_1,\ldots,i_m)\omega(i_1+1,i_2-1,i_3,\ldots, i_m)}}\Bigg|\delta(i_2),
\]
and
	\[
	\delta(x)=
	\begin{cases}
	0,&\text{if}\ x=0,\\
	1,&\text{otherwise}.
	\end{cases}
	\]
	
	(b)
	$\left[M_{z_1},M_{z_1}^*\right]$ is $p$-summable if and only if 
	\begin{equation*}
	p>
	\begin{cases}
	\frac{1}{2},&\text{if}\ \dim\Om_1=1,\\
	\max\left\{\dim\Om_1,p_1(\dim\Om_1-1)\right\},&\text{if}\ \dim\Om_1>1,
	\end{cases}\label{condition1}
	\end{equation*}
	where $\dim\Om_1=m$ is the complex dimension of $\Om_1$.
	
	(c)
	Assume $\dim\Om_1>1$.
	Then $\left[M_{z_2},M_{z_1}^*\right]$ is $p$-summable if and only if $p>\dim\Om_1$.
\end{proposition}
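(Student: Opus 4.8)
The strategy is to reduce everything to the convergence of explicit higher zeta series via the Gamma-function asymptotics of Lemma \ref{fact-gamma}, and then invoke Lemma \ref{fact-1}. I will work from the formula for $\om_1(i)$ in Proposition \ref{lemma-norm-1}. Writing $x_j := (i_j+1)/p_j$ and $|x| := \sum_j x_j$, one has
\[
\om_1(i) = \frac{\pi^m}{\prod p_j}\cdot\frac{\prod_j\Gamma(x_j)}{|x|\,\Gamma(|x|)}.
\]
Part (a) is a direct computation: since $\{b_i\}$ is an orthonormal basis of monomials and $M_{z_1}$ sends $b_i$ to a scalar multiple of $b_{i+e_1}$, the operator $[M_{z_1},M_{z_1}^*]$ is diagonal in this basis with eigenvalue $\lambda$ as displayed, obtained by bookkeeping the ratios $\om_1(i\pm e_1)/\om_1(i)$ (the Kronecker factor $\delta(i_1)$ appears because $M_{z_1}^*b_i=0$ when $i_1=0$). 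For $[M_{z_2},M_{z_1}^*]$ one checks it maps $b_i$ to a multiple of $b_{i+e_1-e_2}$ (hence vanishes when $i_2=0$), so $\sqrt{[M_{z_2},M_{z_1}^*][M_{z_2},M_{z_1}^*]^*}$ is again diagonal with the stated eigenvalue $\mu$; this is just composing the two one-step ratios and reading off the modulus.

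\textbf{Parts (b) and (c): asymptotics of the eigenvalues.} The heart of the matter is to determine the decay rate of $\lambda$ and $\mu$ as $|i|\to\infty$. For $\lambda$, each ratio $\om_1(i+e_1)/\om_1(i)$ equals $\frac{x_1}{|x|}\cdot\frac{|x|}{|x|+1/p_1}\cdot(\text{shift of }|x|)$ up to the $\Gamma$-quotients, and expanding via Lemma \ref{fact-gamma} one finds that the two competing terms in $\lambda$ cancel to leading order, leaving
\[
\lambda \sim \frac{c}{|x|} \quad\text{with a factor }\frac{x_1}{|x|}\text{-type weight},
\]
so that $\lambda$ behaves (up to constants and the $\delta$ factor) like $\frac{i_1}{(i_1+\cdots+i_m)^2}$ after rescaling $x_j\mapsto i_j/p_j$. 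Then $[M_{z_1},M_{z_1}^*]$ is $p$-summable iff $\sum_i\lambda^p<\infty$, i.e.\ iff
\[
\sum_{i\in\bN_+^m}\frac{i_1^{\,p}}{(i_1+\cdots+i_m)^{2p}}<\infty,
\]
together with the lower-dimensional boundary contributions from coordinate hyperplanes (where one or more $i_j=0$). Applying Lemma \ref{fact-1}(a) with the exponents $a_1=p$, $a_j=0$ for $j\geq 2$, $b=2p$ — and separately on each coordinate subspace — yields exactly the condition $p>\max\{m,\,p_1(m-1)\}$ for $m>1$ (the term $p_1(m-1)$ coming from the subsum where $i_1$ alone is large), and $p>1/2$ when $m=1$. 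For (c), a similar expansion of $\mu$ (the difference of two square roots of $\Gamma$-quotients that agree to order $|x|^{-1/2}$, so one must keep the next term, using the second and third identities of Lemma \ref{fact-gamma}) shows $\mu$ decays like $|x|^{-1}$ with \emph{no} extra $i_j$-weight in the numerator, so $p$-summability of $[M_{z_2},M_{z_1}^*]$ amounts to $\sum_i (i_1+\cdots+i_m)^{-2p}<\infty$ on $\bN_+^m$ and its coordinate faces, which by Lemma \ref{fact-1}(a) (all $a_j=0$) is precisely $2p>m$, i.e.\ $p>m=\dim\Om_1$.

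\textbf{Main obstacle.} The delicate point is the asymptotic analysis of $\mu$ in part (c): naively each summand inside the absolute value tends to $1$, so one is subtracting two quantities each $\to 1$ and must show the difference is $\Theta(|x|^{-1})$ uniformly — and crucially that the leading $|x|^{-1/2}$ contributions cancel while the $|x|^{-1}$ term does not vanish identically for the relevant parameter ranges. This requires carefully combining the three Gamma-ratio expansions in Lemma \ref{fact-gamma} (and the rational-function ones) and checking the coefficient of $|x|^{-1}$ is a nonzero constant times something bounded below; I expect this bookkeeping, rather than the zeta-series step, to be where the real work lies. A secondary subtlety is handling the boundary strata $\{i_j=0\}$ correctly: on such a stratum $\Om_1$ restricts to a lower-dimensional egg domain, the $\delta$-factors kill some terms, and one must verify that the convergence condition contributed by each stratum is subsumed by the stated maximum — this is exactly the content of the "for all $\emptyset\neq J$" clause in Lemma \ref{fact-1}(a), so the reduction is clean once the eigenvalue asymptotics are in hand.
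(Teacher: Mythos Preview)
Your overall architecture is right (diagonalize in the monomial basis, expand via Lemma \ref{fact-gamma}, feed into Lemma \ref{fact-1}), but the asymptotics you assert for $\lambda$ and $\mu$ are incorrect, and this is not just a bookkeeping slip --- it is precisely where the parameters $p_j$ enter the answer.

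\textbf{Part (b).} You claim $\lambda$ behaves like $i_1/(i_1+\cdots+i_m)^2$ after ``rescaling $x_j\mapsto i_j/p_j$''. This is false: the $p_j$ do \emph{not} scale out. Writing (as the paper does) $\lambda=\lambda'(\lambda''-1)$ and $N=\sum_{j\ge 2}(i_j+1)/p_j$, Lemma \ref{fact-gamma} gives, for $i_1>0$,
\[
\lambda'\approx \frac{i_1^{1/p_1}}{(i_1+N)^{1/p_1}},\qquad
\lambda''-1\approx \frac{N}{i_1(i_1+N)},
\]
so $\lambda\approx i_1^{1/p_1-1}N/(i_1+N)^{1/p_1+1}$, while for $i_1=0$ one has $\lambda\approx N^{-1/p_1}$. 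The exponent $1/p_1$ is genuinely present. Your series $\sum i_1^{p}/(\sum i_j)^{2p}$ converges iff $p>m$ and never sees $p_1$; the condition $p>p_1(m-1)$ comes from the $i_1=0$ contribution $\sum N^{-p/p_1}$, not from ``the subsum where $i_1$ alone is large'' nor from generic boundary strata as you suggest.

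\textbf{Part (c).} Similarly, $\mu$ is not $\Theta(|x|^{-1})$: the correct asymptotic (obtained by writing $\mu=\mu'|\mu''-1|$ with $\mu''\to 1$) is
\[
\mu\approx \frac{i_1^{1/(2p_1)}\,i_2^{1/(2p_2)}}{(i_1+i_2+M)^{\frac{1}{2}(1/p_1+1/p_2)+1}},
\]
which carries weights in $i_1,i_2$. It so happens that these weights cancel in the convergence criterion (since all exponents in the numerator are $\ge -1$, Lemma \ref{fact-1} reduces to $b>\sum a_j+m$, giving $p>m$), so your conclusion is right by accident. Your displayed computation is also internally inconsistent: from $\mu\sim|x|^{-1}$ one would get $\sum|x|^{-p}$, not $\sum|x|^{-2p}$, and $\sum|x|^{-2p}<\infty$ is $p>m/2$, not $p>m$.

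The fix is to carry out the Gamma-ratio expansions honestly, keeping the $1/p_1$ (resp.\ $1/p_1,1/p_2$) exponents throughout; then Lemma \ref{fact-1} applied to the resulting two series (for $i_1>0$ and $i_1=0$ separately) yields exactly $p>\max\{m,p_1(m-1)\}$, and likewise $p>m$ in (c).
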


%\begin{equation}
%p>\dim\Om.\label{condition3}
%\end{equation}
%\end{proposition}

\begin{proof}
	(a)
	Straightforward computations.
	
	(b)
	We can write
	\[
	\lambda
	=\lambda'(\lambda''-1),
	\]
	which, by Lemma \ref{lemma2},
	\[
	\lambda'=
	\frac{\Gamma\left(\frac{i_1+2}{p_1}\right)}{\Gamma\left(\frac{i_1+1}{p_1}\right)}
	\frac{\Gamma\left(\frac{i_1+1}{p_1}+N\right)}{\Gamma\left(\frac{i_1+2}{p_1}+N\right)}\frac{\frac{i_1+1}{p_1}+N}{\frac{i_1+2}{p_1}+N},
	\]
	\[
	\lambda''
	=
	\frac{\Gamma\left(\frac{i_1+1}{p_1}\right)^2}{\Gamma\left(\frac{i_1}{p_1}\right)\Gamma\left(\frac{i_1+2}{p_1}\right)}
	\frac{\Gamma\left(\frac{i_1}{p_1}+N\right)\Gamma\left(\frac{i_1+2}{p_1}+N\right)}{\Gamma\left(\frac{i_1+1}{p_1}+N\right)^2}
	\frac{\left(\frac{i_1}{p_1}+N\right)\left(\frac{i_1+2}{p_1}+N\right)}{\left(\frac{i_1+1}{p_1}+N\right)^2}\delta(i_1),
	\]
	where
	\[
	N:=\sum_{j=2}^m\frac{i_j+1}{p_j}.
	\]

Formula (\ref{self-1}) shows that $\left[M_{z_1},M_{z_1}^*\right]$ is $p$-summable if and only if the infinite series $\sum_{i\in\bN^m} |\lambda|^p$ converges.
In the following, we derive the asymptotic formula for $\lambda$ when at least one of $i_1, N$ tends to infinity.

\textit{First, assume $m>1$.}
By Lemma \ref{fact-gamma},
\begin{equation}
\lambda'
\approx 
\begin{cases}
i_1^{\frac{1}{p_1}}/
\left(N+i_1\right)^{\frac{1}{p_1}},&\text{if} \ i_1>0,\\
N^{-\frac{1}{p_1}},&\text{if} \ i_1=0,
\end{cases}\label{prime-1}
\end{equation}
and 
\begin{equation}
\lambda''-1\approx 
\begin{cases}
-\frac{1}{i_1}+\frac{1}{i_1+N}-\frac{1}{(i_1+N)^2}
\approx \frac{N}{i_1(i_1+N)},&\text{if} \ i_1>0,\\
1,&\text{if} \ i_1=0.
\end{cases}\label{primeprime-1}
\end{equation}
Here, $f(x)\approx g(x)$ means that as $x$ grows large, the dominant term in the asymptotic expansion of $f(x)$ is $g(x)$, namely $f(x)=Cg(x)+O(x^{-k})$ for some $k\in\bN_+$ and $C>0$.
Note that to obtain the asymptotic formula (\ref{prime-1}) (respectively, (\ref{primeprime-1})), we have used Lemma \ref{fact-gamma} up to $O(x^{-1})$ (respectively, $O(x^{-2})$).
We have shown that
\[
\lambda
\approx
\begin{cases}
i_1^{\frac{1}{p_1}-1}
N/
\left(N+i_1\right)^{\frac{1}{p_1}+1},&\text{if} \ i_1>0,\\
N^{-\frac{1}{p_1}},&\text{if} \ i_1=0.
\end{cases}
\]
Therefore, $\sum|\lambda|^p$ converges if and only if both of the following series converge:
\[
\sum i_1^{\left(\frac{1}{p_1}-1\right)p}N^p/(N+i_1)^{\left(\frac{1}{p_1}+1\right)p},\quad\quad
\sum N^{-\frac{p}{p_1}}.
\]
According to Lemma \ref{fact-1}.(b), the first series converges exactly when $p>\max\{(p_1-1)m,m\}$, and the second converges exactly when $p>p_1(m-1)$.

\textit{Next, assume $m=1$.}
Then $N=0$, and by Lemma \ref{fact-gamma},
\[
\lambda
=\frac{i_1+1}{i_1+2}
\left(\frac{i_1(i_1+2)}{(i_1+1)^2}-1\right)
\approx\frac{1}{i_1^2}.
\]
Therefore, $\sum|\lambda|^p$ converges exactly when $p>1/2$.

The whole analysis proves (b).

(c)
Note that an operator $T$ on a Hilbert space is $p$-summable if and only if $\sqrt{TT^*}$ is so.
Formula (\ref{cross-1}) shows that $\left[M_{z_2},M_{z_1}^*\right]$ is $p$-summable if and only if the infinite series $\sum_{i\in\bN^m}\mu^p$ converges.
Since the region $i_2=0$ has no effect on the summability of this series, we assume $i_2>0$ from now on.
We can write
\[
\mu
=\mu'|\mu''-1|,
\]
\[
\mu'=\mu'_1\mu'_2,
\]
which, by Lemma \ref{lemma2},
\[
\mu'_1
=\sqrt{\frac{\Gamma\left(\frac{i_1+2}{p_1}\right)\Gamma\left(\frac{i_2+1}{p_2}\right)}{\Gamma\left(\frac{i_1+1}{p_1}\right)\Gamma\left(\frac{i_2}{p_2}\right)}
\frac{\Gamma\left(\frac{i_1+1}{p_1}+\frac{i_2+1}{p_2}+M\right)\Gamma\left(\frac{i_1+2}{p_1}+\frac{i_2}{p_2}+M\right)}{\Gamma\left(\frac{i_1+2}{p_1}+\frac{i_2+1}{p_2}+M\right)^2}},
\]
\[
\mu'_2
=\sqrt{\frac{\left(\frac{i_1+1}{p_1}+\frac{i_2+1}{p_2}+M\right)\left(\frac{i_1+2}{p_1}+\frac{i_2}{p_2}+M\right)}{\left(\frac{i_1+2}{p_1}+\frac{i_2+1}{p_2}+M\right)^2}},
\]
\[
\mu''=
\frac{\Gamma\left(\frac{i_1+1}{p_1}+\frac{i_2}{p_2}+M\right)\Gamma\left(\frac{i_1+2}{p_1}+\frac{i_2+1}{p_2}+M\right)}{\Gamma\left(\frac{i_1+1}{p_1}+\frac{i_2+1}{p_2}+M\right)\Gamma\left(\frac{i_1+2}{p_1}+\frac{i_2}{p_2}+M\right)}
\frac{\left(\frac{i_1+1}{p_1}+\frac{i_2}{p_2}+M\right)\left(\frac{i_1+2}{p_1}+\frac{i_2+1}{p_2}+M\right)}{\left(\frac{i_1+1}{p_1}+\frac{i_2+1}{p_2}+M\right)\left(\frac{i_1+2}{p_1}+\frac{i_2}{p_2}+M\right)},
\]
where
\[
M:=\sum_{j=3}^m\frac{i_j+1}{p_j}.
\]
By Lemma \ref{fact-gamma},
\[
\mu'\approx
i_1^{\frac{1}{2p_1}}i_2^{\frac{1}{2p_2}}/(i_1+i_2+M)^{\frac{1}{2}\left(\frac{1}{p_1}+\frac{1}{p_2}\right)},
\]
and 
\[
\mu''-1
\approx \frac{1}{i_1+i_2+M}-\frac{1}{(i_1+i_2+M)^2}
\approx \frac{1}{i_1+i_2+M},
\]
hence
\[
\mu
\approx
i_1^{\frac{1}{2p_1}}i_2^{\frac{1}{2p_2}}/(i_1+i_2+M)^{\frac{1}{2}\left(\frac{1}{p_1}+\frac{1}{p_2}\right)+1}.
\]
Therefore, $\sum \mu^p<\infty$ if and only if 
\[
\sum i_1^{\frac{p}{2p_1}}i_2^{\frac{p}{2p_2}}/(i_1+i_2+M)^{\frac{1}{2}\left(\frac{1}{p_1}+\frac{1}{p_2}\right)p+p}<\infty.
\]
According to Lemma \ref{fact-1}.(b), this happens exactly when $p>m$.
\end{proof}

\section{Proof of Theorem \ref{theorem2}}\label{section-theorem2}
This section proves Theorem \ref{theorem2} about the $p$-essential normality of the Bergman module $L^2_a(\Om_2)$ over the domain $\Om_2$ given in (\ref{egg2}).
For notational simplicity, we work on 
\begin{equation}
\Om_2:=\left\{\left(\sum\limits_{j=1}^{m}\left|z_j\right|^{2p_j}\right)^{a}+\left(\sum\limits_{k=1}^{n}\left|w_k\right|^{2q_k}\right)^{b}+\left(\sum\limits_{l=1}^{o}\left|u_l\right|^{2r_l}\right)^{c}+\cdots<1\right\}\sub\bC^{m+n+o+\cdots},\label{egg3}
\end{equation}
instead of (\ref{egg2}).
Similar to the discussions in Section \ref{section-theorem1}, the normalized monomials
\begin{equation}
b_{\al,\be,\ldots}:=\frac{z^\al w^{\be}\cdots}{\sqrt{\om_2(\al,\be,\ldots)}},\quad
(\al,\be,\ldots)\in\bN^{m+n+\cdots},\label{onb2}
\end{equation}where
\[
\om_2(\al,\be,\ldots):=\left\|z^\al w^{\be}\cdots\right\|^2_{L^2_a(\Om_2)},
\]
constitute an orthonormal basis for the Hilbert space $L^2_a(\Om_2)$.
An explicit formula for the norm of monomials is given by:

\begin{lemma}\label{lemma2}
	Given multi-indices $\al\in\bN^m, \be\in\bN^n, \ldots$, we have
	\[
	\om_2(\al,\be,\ldots)=
	\frac{\pi^{m+n+\cdots}}{\prod p_j\prod q_k\cdots}
	\frac{1}{ab\cdots}
	B\left(\frac{\al+1}{p}\right)
	B\left(\frac{\be+1}{q}\right)\cdots
	\frac{B\left(\left|\frac{\al+1}{ap}\right|,\left|\frac{\be+1}{bq}\right|,\ldots\right)}{\left|\frac{\al+1}{ap}\right|+\left|\frac{\be+1}{bq}\right|+\cdots}.
	\]
\end{lemma}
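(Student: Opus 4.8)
The plan is to compute $\om_2(\al,\be,\ldots)$ directly by integrating the monomial $|z^\al w^\be\cdots|^2$ over the domain $\Om_2$ of type (\ref{egg3}), reducing everything to one-variable integrals via polar coordinates and then to Beta integrals. First I would pass to polar coordinates in each complex variable separately: writing $z_j=\rho_j e^{\mathrm{i}\theta_j}$, $w_k=s_k e^{\mathrm{i}\phi_k}$, etc., the angular integrations contribute a factor $(2\pi)^{m+n+\cdots}$, and after the substitutions $\rho_j^{2p_j}=x_j$, $s_k^{2q_k}=y_k$, \ldots\ (which produce the Jacobian factors $\frac{1}{2p_j}x_j^{(\al_j+1)/p_j-1}\,dx_j$, and similarly for the other blocks) the problem becomes computing
\[
\int x_1^{c_1-1}\cdots x_m^{c_m-1}y_1^{d_1-1}\cdots\,dx\,dy\cdots
\]
over the region $\bigl(\sum x_j\bigr)^a+\bigl(\sum y_k\bigr)^b+\cdots<1$ in the positive orthant, where $c_j=(\al_j+1)/p_j$, $d_k=(\be_k+1)/q_k$, and so on.

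The key step is then a two-stage application of the Dirichlet integral formula. For the inner block, integrating $x_1^{c_1-1}\cdots x_m^{c_m-1}$ over the simplex $\{\sum x_j<t\}$ gives $\frac{t^{|c|}}{|c|}B(c_1,\ldots,c_m)$ where $|c|=\sum c_j$ and $B(c_1,\ldots,c_m)=\frac{\prod\Gamma(c_j)}{\Gamma(|c|)}$ is the multivariable Beta function; I get one such factor for each block, producing $B\!\left(\frac{\al+1}{p}\right)B\!\left(\frac{\be+1}{q}\right)\cdots$ together with the factors $\frac{1}{|c|}=\frac{1}{|(\al+1)/p|}$, etc. After this, what remains is an integral over the variables $t=\sum x_j$, $s=\sum y_k$, \ldots\ of $t^{|c|-1}s^{|d|-1}\cdots$ against the region $t^a+s^b+\cdots<1$; substituting $t^a=\xi$, $s^b=\eta$, \ldots\ turns this into one more simplex integral $\int \xi^{|c|/a-1}\eta^{|d|/b-1}\cdots\,d\xi\,d\eta\cdots$ over $\{\xi+\eta+\cdots<1\}$, which evaluates by Dirichlet's formula once more to $\frac{1}{|(\al+1)/(ap)|+|(\be+1)/(bq)|+\cdots}\,B\!\left(\bigl|\tfrac{\al+1}{ap}\bigr|,\bigl|\tfrac{\be+1}{bq}\bigr|,\ldots\right)$ and contributes the Jacobian factors $\frac{1}{a},\frac{1}{b},\ldots$ Collecting the $(2\pi)^{m+n+\cdots}$ from the angles, the $\frac{1}{2p_j}$, etc., from the radial substitutions (which combine with $(2\pi)^{m+n+\cdots}$ to give $\frac{\pi^{m+n+\cdots}}{\prod p_j\prod q_k\cdots}$), and the two rounds of Beta factors yields exactly the claimed formula.

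The main obstacle — really the only subtlety, since everything else is a bookkeeping exercise — is justifying the iterated-integral reduction cleanly: one must check that the region $\{\bigl(\sum x_j\bigr)^a+\cdots<1\}$ genuinely fibers over the ``block sum'' variables $(t,s,\ldots)$ so that Fubini applies, and that the substitutions $t^a=\xi$ remain valid (monotone, smooth) for arbitrary positive real exponents $a,b,\ldots$; this is where one uses that the $x_j$ are nonnegative and the exponents positive. For the special case $b_{\al,\be,\ldots}$ stated in (\ref{onb2}) I would simply note the computation specializes with all $\al_j,\be_k$ held fixed. Alternatively, since the excerpt cites \cite{dangelo} and \cite{jabbari-egg-index} for the one-block case (Proposition \ref{lemma-norm-1}), the cleanest exposition is to invoke that result for the innermost Beta factors and only carry out the new outer layer of Dirichlet integration over $(t,s,\ldots)$; I expect the paper to do essentially this, so my proof plan would present the single-layer Dirichlet computation in detail and reference Proposition \ref{lemma-norm-1} for the rest.
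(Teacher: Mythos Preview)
Your proposal is correct: the iterated polar-coordinate/Dirichlet-integral computation you outline is the standard derivation and yields exactly the stated formula. The paper itself gives no proof at all for this lemma---it simply cites \cite{dangelo} and \cite{jabbari-egg-index}---so your write-up actually supplies more detail than the original; your closing remark anticipating this is on the mark.
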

\begin{proof}
\cite{dangelo} or \cite{jabbari-egg-index}.
\end{proof}

Theorem \ref{theorem2} follows immediately from:

\begin{proposition}\label{proposition-2}
For each coordinate function $f=z_j,w_k,\ldots$, let $M_{f}:L^2_{a}(\Om_2)\ra L^2_{a}(\Om_2)$ be the multiplication by $f$.
%Consider the monomial orthonormal basis $b_{\al,\be,\gamma,\ldots}$ given by (\ref{onb2}).
Then:

(a)
Given $(\al,\be,\ldots)\in\bN^{m+n+\cdots}$, we have
\begin{align}
\left[M_{z_1},M_{z_1}^*\right](b_{\al,\be,\ldots})
=\lambda b_{\al,\be,\ldots},\label{self}\quad
\end{align}
\begin{equation}
\sqrt{\left[M_{z_2},M_{z_1}^*\right]\left[M_{z_2},M_{z_1}^*\right]^*}(b_{\al,\be,\ldots})
=\mu b_{\al,\be,\ldots},\label{cross1}
\end{equation}
\begin{equation}
\sqrt{\left[M_{z_1},M_{w_1}^*\right]\left[M_{z_1},M_{w_1}^*\right]^*}(b_{\al,\be,\ldots})
=\nu b_{\al,\be,\ldots},\label{cross2}
\end{equation}
where
\[
\lambda
=\frac{\om(\al,\be,\ldots)}{\om(\al_1-1,\al_2,\ldots,\al_m,\be,\ldots)}\delta(\al_1)
-\frac{\om(\al_1+1,\al_2,\ldots,\al_m,\be,\ldots)}{\om(\al,\be,\ldots)},
\]
\begin{multline*}
\mu
=
\Bigg|\sqrt{\frac{\om(\al,\be,\ldots)\om(\al_1+1,\al_2-1,\al_3,\ldots,\al_m,\be,\ldots)}{\om(\al_1,\al_2-1,\al_3,\ldots,\al_m,\be,\ldots)^2}}-\\
\sqrt{\frac{\om(\al_1+1,\al_2,\ldots,\al_m,\be,\ldots)^2}{\om(\al,\be,\ldots)\om(\al_1+1,\al_2-1,\al_3,\ldots, \al_m,\be,\ldots)}}\Bigg|\delta(\al_2),
\end{multline*}
\begin{multline*}
\nu
=
\Bigg|\sqrt{\frac{\om(\al,\be,\ldots)\om(\al_1+1,\al_2,\ldots,\al_m,\be_1-1,\be_2,\ldots,\be_n,\gamma,\ldots)}{\om(\al,\be_1-1,\be_2,\ldots,\be_n,\gamma,\ldots)^2}}-\\
\sqrt{\frac{\om(\al_1+1,\al_2,\ldots,\al_m,\be,\ldots)^2}{\om(\al,\be,\ldots)\om(\al_1+1,\al_2,\ldots,\al_m,\be_1-1,\be_2,\ldots,\be_n,\gamma,\ldots)}}\Bigg|\delta(\be_1),
\end{multline*}
and
\[
\delta(x)=
\begin{cases}
0,&\text{if}\ x=0,\\
1,&\text{otherwise}.
\end{cases}
\]

(b)
$\left[M_{z_1},M_{z_1}^*\right]$ is $p$-summable if and only if  
\begin{equation*}
p>
\begin{cases}
\frac{1}{2},&\text{if}\ \dim\Om_2=1,\\
\max\left\{\dim\Om_2,ap_1(\dim\Om_2-m)\right\},&\text{if}\ \dim\Om_2>1,m=1,\\
\max\left\{\dim\Om_2,p_1(\dim\Om_2-1),ap_1(\dim\Om_2-m)\right\},&\text{if}\ \dim\Om_2>1,m>1.
\end{cases}\label{condition1}
\end{equation*}

(c)
Assume $m>1$.
$\left[M_{z_2},M_{z_1}^*\right]$ is $p$-summable if and only if  
\begin{equation*}
p>
\begin{cases}
\max\left\{\dim\Om_2\right\},&\text{if}\ a= 1,\\
\max\left\{\dim\Om_2,\frac{2a(\dim\Om_2-m)}{1/p_1+1/p_2}\right\},&\text{if}\ a\neq 1.
\end{cases}\label{condition2}
\end{equation*}

(d)
$\left[M_{z_1},M_{w_1}^*\right]$ is $p$-summable if and only if $p>\dim\Om_2$.
\end{proposition}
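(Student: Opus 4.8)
The plan is to follow the pattern of parts (b) and (c): diagonalize the commutator, factor its eigenvalue as a ``leading part'' times ``something small,'' read off the asymptotics with Lemma~\ref{fact-gamma}, and feed the outcome into the zeta-series lemmas. Note that $\dim\Om_2\geq 2$ here, since the statement presupposes two blocks $z,w$. By part (a), equation~(\ref{cross2}), the operator $[M_{z_1},M_{w_1}^*]$ is diagonal in the orthonormal basis~(\ref{onb2}) with singular values $\nu=\nu(\al,\be,\ldots)$, so it is $p$-summable if and only if $\sum_{(\al,\be,\ldots)\in\bN^{\dim\Om_2}}\nu^p<\infty$. As $\nu=0$ unless $\be_1\geq 1$, the sum may be restricted to that range; the finitely many lower-dimensional faces on which some other coordinate vanishes will not affect the threshold and can be handled exactly as in the proofs of Lemmas~\ref{fact-1} and~\ref{fact-3}.

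Writing $\om_2^{+},\om_2^{-},\om_2^{+-}$ for $\om_2(\al,\be,\ldots)$ with, respectively, $\al_1$ raised by one, $\be_1$ lowered by one, and both, I would factor $\nu=\nu'\,|\nu''-1|$ with
\[
\nu''=\frac{\om_2(\al,\be,\ldots)\,\om_2^{+-}}{\om_2^{-}\,\om_2^{+}},\qquad
\nu'=\frac{\om_2^{+}}{\sqrt{\om_2(\al,\be,\ldots)\,\om_2^{+-}}}\asymp\sqrt{\frac{\om_2^{+}}{\om_2^{-}}}.
\]
Substituting the norm formula of Lemma~\ref{lemma2}, almost everything cancels: every $\Gamma$-factor attached to an untouched variable, and every $\Gamma$-factor carrying a $z$- or $w$-index other than $\al_1$ or $\be_1$. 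One is left with
\[
\nu''=\frac{\Gamma\!\left(L+1-\tfrac{1}{bq_1}\right)\Gamma\!\left(L+1+\tfrac{1}{ap_1}\right)}{\Gamma(L+1)\,\Gamma\!\left(L+1+\tfrac{1}{ap_1}-\tfrac{1}{bq_1}\right)},
\]
where $L:=\left|\tfrac{\al+1}{ap}\right|+\left|\tfrac{\be+1}{bq}\right|+\cdots$ is the argument-sum of the outermost Beta function in Lemma~\ref{lemma2}, a linear form in all $\dim\Om_2$ indices with positive coefficients, so $L\asymp 1+|\al|+|\be|+\cdots$ (with $|\al|:=\al_1+\cdots$, etc.). Since $\Gamma(x+s)\Gamma(x+t)/\big(\Gamma(x)\Gamma(x+s+t)\big)=1-st/x+O(x^{-2})$, a special case of Lemma~\ref{fact-gamma}, this gives $|\nu''-1|\asymp L^{-1}$; collecting the surviving $\Gamma$-ratios of $\nu'$ via Lemma~\ref{fact-gamma} then yields
\[
\nu\asymp\frac{\al_1^{\frac{1}{2p_1}}\,\be_1^{\frac{1}{2q_1}}\,(1+|\al|)^{\frac{1-a}{2ap_1}}\,(1+|\be|)^{\frac{1-b}{2bq_1}}}{\bigl(1+|\al|+|\be|+\cdots\bigr)^{\,1+\frac{1}{2ap_1}+\frac{1}{2bq_1}}}.
\]

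For sufficiency I would bound $\al_1^{1/(2p_1)}\leq(1+|\al|)^{1/(2p_1)}$, so that the $\al$-part of the numerator is $\leq(1+|\al|)^{1/(2ap_1)}\leq(1+|\al|+|\be|+\cdots)^{1/(2ap_1)}$, and similarly for the $\be$-part; the surplus denominator powers cancel and $\nu\lesssim(1+|\al|+|\be|+\cdots)^{-1}$, hence $\sum\nu^p<\infty$ whenever $p>\dim\Om_2$, the series $\sum_{\bN^{\dim\Om_2}}(1+|\al|+\cdots)^{-p}$ converging precisely then (Lemma~\ref{fact-1}(a), all exponents zero). For necessity I would restrict the sum to $\bN_+^{\dim\Om_2}$, where $1+|\al|\asymp|\al|$ and the denominator is comparable to an honest linear form, so that $\nu^p$ is, up to multiplicative constants, of the shape treated in Lemma~\ref{fact-2}, the numerator being the product of the linear forms $\al_1,\,|\al|,\,\be_1,\,|\be|$ raised to real powers. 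Lemma~\ref{fact-2} then forces~(\ref{iff-b}), whose instance for the full index set reads $p+\tfrac{p}{2ap_1}+\tfrac{p}{2bq_1}>\dim\Om_2+\bigl(\tfrac{1}{2p_1}+\tfrac{1-a}{2ap_1}+\tfrac{1}{2q_1}+\tfrac{1-b}{2bq_1}\bigr)p=\dim\Om_2+\tfrac{p}{2ap_1}+\tfrac{p}{2bq_1}$, that is, $p>\dim\Om_2$. The remaining instances of~(\ref{iff-b}), together with the lower-dimensional faces, are strictly weaker; should one want the exact convergence criterion for this series instead of merely this necessity, one collapses the block-sums by Lemma~\ref{fact-1}(b) and applies Lemma~\ref{fact-3}(e)--(f).

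I expect the main obstacle to lie in the middle step: executing the cancellations in $\nu''$ and $\nu'$ correctly and pinning down the genuine leading terms — in particular, seeing that the single power $\al_1^{1/(2p_1)}$ and the block power $(1+|\al|)^{(1-a)/(2ap_1)}$ (and their $w$-counterparts) conspire to $(1+|\al|)^{1/(2ap_1)}$, so that $\nu\lesssim L^{-1}$. This redistribution of exponents is precisely why, in contrast with (b) and (c), the threshold for a mixed-block commutator carries no boundary-geometry term and is simply the complex dimension.
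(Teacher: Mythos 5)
Your proposal covers only part (d). You take the diagonalization of part (a) as given ("straightforward computations" in the paper, so that is forgivable), but you also invoke parts (b) and (c) merely as the "pattern" to follow, and those are where the bulk of the proposition's work lies: the factorizations $\lambda=\lambda'(\lambda''-1)$ and $\mu=\mu'|\mu''-1|$, the asymptotics that distinguish $\al_1>0$ from $\al_1=0$ and the cases $a<1$, $a=1$, $a>1$, and the reduction to the series of Lemma \ref{fact-3}(a),(c),(d) — this is precisely what produces the geometric thresholds $p_1(\dim\Om_2-1)$, $ap_1(\dim\Om_2-m)$ and $\tfrac{2a(\dim\Om_2-m)}{1/p_1+1/p_2}$ in the statement. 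None of that analysis is supplied, so as a proof of Proposition \ref{proposition-2} the attempt has a genuine gap: parts (b) and (c) (and the computation behind (a)) are missing, not merely abbreviated.

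Within part (d) your argument is essentially sound, and for sufficiency it is actually simpler than the paper's. Two small repairs: by Lemma \ref{lemma2} the quotient $\nu''=\om_2(\al,\be,\ldots)\,\om_2^{+-}\big/(\om_2^{-}\om_2^{+})$ is not exactly the displayed Gamma-ratio — the factor $1\big/\bigl(|\tfrac{\al+1}{ap}|+|\tfrac{\be+1}{bq}|+\cdots\bigr)$ in the norm formula contributes an extra rational factor $\tfrac{(S-\frac{1}{bq_1})(S+\frac{1}{ap_1})}{S(S+\frac{1}{ap_1}-\frac{1}{bq_1})}$ with $S$ equal to your $L$ (and the "$+1$" in your Gamma arguments is spurious); since this factor is $1+O(S^{-2})$ while the Gamma-ratio is $1+\tfrac{1}{ap_1bq_1}S^{-1}+O(S^{-2})$, your conclusion $|\nu''-1|\asymp L^{-1}$ survives. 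Also $\al_1^{1/(2p_1)}$ should be $(1+\al_1)^{1/(2p_1)}$ so the asymptotics cover the face $\al_1=0$, where $\nu$ need not vanish; this only strengthens your upper bound. Granting the asymptotics (which agree with the paper's, since $-\tfrac{1}{2p_1}(1-\tfrac1a)=\tfrac{1-a}{2ap_1}$), your telescoping $\tfrac{1}{2p_1}+\tfrac{1-a}{2ap_1}=\tfrac{1}{2ap_1}$ gives $\nu\lesssim(1+|\al|+|\be|+\cdots)^{-1}$ and hence convergence for all $p>\dim\Om_2$ directly, whereas the paper collapses the blocks as in Lemma \ref{fact-1}(b) and then needs the five-index series of Lemma \ref{fact-3}(f); your necessity step via Lemma \ref{fact-2}, with the instance $J=\{1,\ldots,\dim\Om_2\}$ telescoping again to $p>\dim\Om_2$, is legitimate and close in spirit to the paper's. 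So the route through (d) would be acceptable as written up — but (b) and (c) still need actual proofs before the proposition is established.
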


%\begin{equation}
%p>\dim\Om.\label{condition3}
%\end{equation}
%\end{proposition}

\begin{proof}
(a)
Straightforward computations.

(b)
We can write
\[
\lambda
=\lambda'(\lambda''-1),
\]
\[
\lambda''
=\lambda''_1\lambda''_2\delta(\al_1),
\]
which, by Lemma \ref{lemma2},
\[
\lambda'=
\frac{\Gamma\left(\frac{\al_1+2}{p_1}\right)\Gamma\left(\frac{\al_1+1}{p_1}+A\right)}{\Gamma\left(\frac{\al_1+1}{p_1}\right)\Gamma\left(\frac{\al_1+2}{p_1}+A\right)}
\frac{\Gamma\left(\frac{\al_1+2}{ap_1}+\frac{A}{a}\right)\Gamma\left(\frac{\al_1+1}{ap_1}+\frac{A}{a}+L\right)}{\Gamma\left(\frac{\al_1+1}{ap_1}+\frac{A}{a}\right)\Gamma\left(\frac{\al_1+2}{ap_1}+\frac{A}{a}+L\right)}
\frac{\frac{\al_1+1}{ap_1}+\frac{A}{a}+L}{\frac{\al_1+2}{ap_1}+\frac{A}{a}+L},
\]
\[
\lambda''_1
=
\frac{\Gamma\left(\frac{\al_1+1}{p_1}\right)^2}{\Gamma\left(\frac{\al_1}{p_1}\right)\Gamma\left(\frac{\al_1+2}{p_1}\right)}
\frac{\Gamma\left(\frac{\al_1}{p_1}+A\right)\Gamma\left(\frac{\al_1+2}{p_1}+A\right)}{\Gamma\left(\frac{\al_1+1}{p_1}+A\right)^2}
\frac{\Gamma\left(\frac{\al_1+1}{ap_1}+\frac{A}{a}\right)^2}{\Gamma\left(\frac{\al_1}{ap_1}+\frac{A}{a}\right)\Gamma\left(\frac{\al_1+2}{ap_1}+\frac{A}{a}\right)},
\]
\[
\lambda''_2
=
\frac{\Gamma\left(\frac{\al_1}{ap_1}+\frac{A}{a}+L\right)\Gamma\left(\frac{\al_1+2}{ap_1}+\frac{A}{a}+L\right)}{\Gamma\left(\frac{\al_1+1}{ap_1}+\frac{A}{a}+L\right)^2}
\frac{\left(\frac{\al_1}{ap_1}+\frac{A}{a}+L\right)\left(\frac{\al_1+2}{ap_1}+\frac{A}{a}+L\right)}{\left(\frac{\al_1+1}{ap_1}+\frac{A}{a}+L\right)^2},
\]
where
\[
A=\sum_{j=2}^m\frac{\al_j+1}{p_j},
\]
\[
L=\sum_{k=1}^n\frac{\be_k+1}{bq_k}+\sum_{l=1}^o\frac{\gamma_l+1}{cr_l}+\cdots.
\]

Formula (\ref{self}) shows that $\left[M_{z_1},M_{z_1}^*\right]$ is $p$-summable if and only if the infinite series $\sum |\lambda|^p$ converges.
In the following, we derive the asymptotic formula for $\lambda$ when at least one of $\al_1, A, L$ tends to infinity.

\textit{First, assume $m>1$.}
By Lemma \ref{fact-gamma},
\begin{equation}
\lambda'
\approx 
\begin{cases}
\al_1^{\frac{1}{p_1}}
\left(\al_1+A\right)^{-\frac{1}{p_1}(1-\frac{1}{a})}
/\left(\al_1+A+L\right)^{\frac{1}{ap_1}},&\text{if}\ \al_1>0,\\
A^{-\frac{1}{p_1}(1-\frac{1}{a})}
/\left(A+L\right)^{\frac{1}{ap_1}},&\text{if}\ \al_1=0,
\end{cases}\label{prime}
\end{equation}
and
\begin{equation}
\lambda''-1
\approx 
-\frac{1}{\al_1}
+\frac{1}{\al_1+A}
-\frac{a^{-2}}{\al_1+A}
+\frac{a^{-2}}{\al_1+A+L}
\approx
\frac{A}{\al_1(\al_1+A)}+\frac{a^{-2}L}{(\al_1+A)(\al_1+A+L)},\label{primeprime}
\end{equation}
if $\al_1>0$, and $\lambda''-1= -1$ if $\al_1=0$.
As before, $f(x)\approx g(x)$ means that as $x$ grows large, the dominant term in the asymptotic expansion of $f(x)$ is $g(x)$, namely $f(x)=Cg(x)+O(x^{-k})$ for some $k\in\bN_+$ and $C>0$.
Note that to obtain the asymptotic formula (\ref{prime}) (respectively, (\ref{primeprime})), we have used Lemma \ref{fact-gamma} up to $O(x^{-1})$ (respectively, $O(x^{-2})$).
We have shown that
\[
\lambda
\approx
\begin{cases}
\frac{\al_1^{\frac{1}{p_1}-1}
	\left(\al_1+A\right)^{-\frac{1}{p_1}\left(1-\frac{1}{a}\right)-1}A}{\left(\al_1+A+L\right)^{\frac{1}{ap_1}}}
+
\frac{\al_1^{\frac{1}{p_1}}
	\left(\al_1+A\right)^{-\frac{1}{p_1}\left(1-\frac{1}{a}\right)-1}L}{\left(\al_1+A+L\right)^{\frac{1}{ap_1}+1}},&\text{if}\ \al_1>0,\\
A^{-\frac{1}{p_1}(1-\frac{1}{a})}
/\left(A+L\right)^{\frac{1}{ap_1}},&\text{if}\ \al_1=0.
\end{cases}
\]
According to Lemma \ref{fact-sumoftwoseries}, $\sum|\lambda|^p$ converges if and only if all of the following series converge:
\begin{equation}
\sum \al_1^{\left(\frac{1}{p_1}-1\right)p}(\al_1+A)^{-\left(\frac{1}{p_1}\left(1-\frac{1}{a}\right)+1\right)p}A^p/(\al_1+A+L)^{\frac{p}{ap_1}},\label{22-first}
\end{equation}
\begin{equation}
\sum \al_1^{\frac{p}{p_1}}(\al_1+A)^{-\left(\frac{1}{p_1}\left(1-\frac{1}{a}\right)+1\right)p}L^p/(\al_1+A+L)^{\left(\frac{1}{ap_1}+1\right)p},\label{2-second}
\end{equation}
\begin{equation}
\sum A^{-\frac{p}{p_1}\left(1-\frac{1}{a}\right)}/(A+L)^{\frac{p}{ap_1}}.\label{2-third}
\end{equation}
Similar arguments as in the proof of Lemma \ref{fact-1}.(b) show that the convergence of the series (\ref{22-first}) is equivalent to the convergence of
\[
\sum_{i\in\bN^{3}} \frac{i_1^{\left(\frac{1}{p_1}-1\right)p}(i_1+i_2)^{-\left(\frac{1}{p_1}\left(1-\frac{1}{a}\right)+1\right)p}i_2^{p+m-2}i_3^{m'-1}}{(i_1+i_2+i_3)^{\frac{p}{ap_1}}},\label{2-first}
\]
where 
\[
m':=\dim\Om_2-m=n+o+\cdots.
\]
According to Lemma \ref{fact-3}.(a), this latter series converges exactly when 
\begin{equation*}
p>\max\left\{m+m',p_1\left(m+m'-1\right),ap_1m'\right\}.
\end{equation*}
Likewise, the series (\ref{2-second}) and (\ref{2-third}) converge exactly when 
\begin{equation*}
p>\max\left\{m+m',ap_1m'\right\}\quad\text{and}\quad
p>\max\left\{p_1\left(m+m'-1\right),ap_1m'\right\},
\end{equation*}
respectively.

\textit{Next, assume $m=1$.}
Then, $A=0$ and 
\[
\lambda'
=\frac{\Gamma\left(\frac{\al_1+2}{ap_1}\right)\Gamma\left(\frac{\al_1+1}{ap_1}+L\right)}{\Gamma\left(\frac{\al_1+1}{ap_1}\right)\Gamma\left(\frac{\al_1+2}{ap_1}+L\right)}
\frac{\frac{\al_1+1}{ap_1}+L}{\frac{\al_1+2}{ap_1}+L},
\]
\[
\lambda''=
\begin{cases}
\frac{\Gamma\left(\frac{\al_1+1}{ap_1}\right)^2}{\Gamma\left(\frac{\al_1}{ap_1}\right)\Gamma\left(\frac{\al_1+2}{ap_1}\right)}
\frac{\Gamma\left(\frac{\al_1}{ap_1}+L\right)\Gamma\left(\frac{\al_1+2}{ap_1}+L\right)}{\Gamma\left(\frac{\al_1+1}{ap_1}+L\right)^2}
\frac{\left(\frac{\al_1}{ap_1}+L\right)\left(\frac{\al_1+2}{ap_1}+L\right)}{\left(\frac{\al_1+1}{ap_1}+L\right)^2}
,&\text{if}\ \al_1>0,m'>0,\\
\frac{\al_1(\al_1+2)}{(\al_1+1)^2}
,&\text{if}\ \al_1>0,m'=0,\\
0,&\text{if}\ \al_1=0.
\end{cases}
\]

(Note that $L=0$ if $m'=0$.)
By Lemma \ref{fact-gamma},
\[
\lambda'
\approx 
\begin{cases}
\al_1^{\frac{1}{ap_1}}/
\left(\al_1+L\right)^{\frac{1}{ap_1}},&\text{if}\ \al_1>0,\\
L^{-\frac{1}{ap_1}},&\text{if}\ \al_1=0,
\end{cases}
\]
\[
\lambda''-1
\approx
\begin{cases}
-\frac{1}{\al_1}
+\frac{1}{\al_1+L}
-\frac{1}{(\al_1+L)^2}
\approx
\frac{L}{\al_1(\al_1+L)},&\text{if}\ \al_1>0,m'>0,\\
1/\al_1^2,&\text{if}\ \al_1>0,m'=0,\\
1,&\text{if}\ \al_1=0.
\end{cases}
\]
We have shown that
\[
\lambda
\approx
\begin{cases}
\al_1^{\frac{1}{ap_1}-1}L/\left(\al_1+L\right)^{\frac{1}{ap_1}+1},&\text{if}\ \al_1>0,m'>0,\\
1/\al_1^2,&\text{if}\ \al_1>0,m'=0,\\
L^{-\frac{1}{ap_1}},&\text{if}\ \al_1=0.
\end{cases}
\]
According to Lemma \ref{fact-1}.(b), the series $\sum|\lambda|^p$ converges exactly when 
\[
p>
\begin{cases}
\max\left\{ap_1m',m'+1\right\},&\text{if}\ m'>0,\\
\frac{1}{2},&\text{if}\ m'=0.
\end{cases}
\]

The whole analysis proves (b).

%According to Lemma \ref{fact}, this happens exactly when
%\[
%p>\begin{cases}
%p_1(m-1),& p_1\geq 1,\\
%\frac{m}{2},& p_1\leq \frac{1}{2}\ \text{or}\  \frac{1}{2}\leq p_1<1.
%\end{cases}
%\]
%(This follows from Lemma \ref{fact} when $p_1\neq 1$.
%Now assume $p_1=1$. 
%Our series is $S:=\sum i_1^{-p}(N+i_1)^{-p}$.
%If $p<1$, then $S$ diverges by Lemma \ref{fact}.
%If $p=1$, by the integral test, $S$ diverges.
%If $p>1$, then $S$ converges exactly when $p>m-1$, according to Lemma \ref{fact}.)

(c)
Formula (\ref{cross1}) shows that $\left[M_{z_2},M_{z_1}^*\right]$ is $p$-summable if and only if the infinite series $\sum\mu^p$ converges.
Since the region $\al_2=0$ has no effect on the summability of this series, we assume $\al_2>0$.
We can write
\[
\mu
=\mu'\left|\mu''-1\right|,
\]
\[
\mu'=\mu'_1\mu'_2\mu'_3\mu'_4,
\]
\[
\mu''=\mu''_1\mu''_2\mu''_3\mu''_4,
\]
which, by Lemma \ref{lemma2},
\[
\mu'_1
=\sqrt{\frac{\Gamma\left(\frac{\al_1+2}{p_1}\right)\Gamma\left(\frac{\al_2+1}{p_2}\right)}{\Gamma\left(\frac{\al_1+1}{p_1}\right)\Gamma\left(\frac{\al_2}{p_2}\right)}
\frac{\Gamma\left(\frac{\al_1+1}{p_1}+\frac{\al_2+1}{p_2}+\cA\right)\Gamma\left(\frac{\al_1+2}{p_1}+\frac{\al_2}{p_2}+\cA\right)}{\Gamma\left(\frac{\al_1+2}{p_1}+\frac{\al_2+1}{p_2}+\cA\right)^2}},
\]
\[
\mu'_2
=\sqrt{\frac{\Gamma\left(\frac{\al_1+2}{ap_1}+\frac{\al_2+1}{ap_2}+\frac{\cA}{a}\right)^2}{\Gamma\left(\frac{\al_1+1}{ap_1}+\frac{\al_2+1}{ap_2}+\frac{\cA}{a}\right)\Gamma\left(\frac{\al_1+2}{ap_1}+\frac{\al_2}{ap_2}+\frac{\cA}{a}\right)}},
\]
\[
\mu'_3
=\sqrt{\frac{\Gamma\left(\frac{\al_1+1}{ap_1}+\frac{\al_2+1}{ap_2}+\frac{\cA}{a}+L\right)\Gamma\left(\frac{\al_1+2}{ap_1}+\frac{\al_2}{ap_2}+\frac{\cA}{a}+L\right)}{\Gamma\left(\frac{\al_1+2}{ap_1}+\frac{\al_2+1}{ap_2}+\frac{\cA}{a}+L\right)^2}},
\]
\[
\mu'_4
=\frac{\left(\frac{\al_1+1}{ap_1}+\frac{\al_2+1}{ap_2}+\frac{\cA}{a}+L\right)\left(\frac{\al_1+2}{ap_1}+\frac{\al_2}{ap_2}+\frac{\cA}{a}+L\right)}{\left(\frac{\al_1+2}{ap_1}+\frac{\al_2+1}{ap_2}+\frac{\cA}{a}+L\right)^2},
\]

\[
\mu''_1
=\frac{\Gamma\left(\frac{\al_1+1}{p_1}+\frac{\al_2}{p_2}+\cA\right)\Gamma\left(\frac{\al_1+2}{p_1}+\frac{\al_2+1}{p_2}+\cA\right)}{\Gamma\left(\frac{\al_1+1}{p_1}+\frac{\al_2+1}{p_2}+\cA\right)\Gamma\left(\frac{\al_1+2}{p_1}+\frac{\al_2}{p_2}+\cA\right)},
\]
\[
\mu''_2
=\frac{\Gamma\left(\frac{\al_1+1}{ap_1}+\frac{\al_2+1}{ap_2}+\frac{\cA}{a}\right)\Gamma\left(\frac{\al_1+2}{ap_1}+\frac{\al_2}{ap_2}+\frac{\cA}{a}\right)}{\Gamma\left(\frac{\al_1+1}{ap_1}+\frac{\al_2}{ap_2}+\frac{\cA}{a}\right)\Gamma\left(\frac{\al_1+2}{ap_1}+\frac{\al_2+1}{ap_2}+\frac{\cA}{a}\right)},
\]
\[
\mu''_3
=\frac{\Gamma\left(\frac{\al_1+1}{ap_1}+\frac{\al_2}{ap_2}+\frac{\cA}{a}+L\right)\Gamma\left(\frac{\al_1+2}{ap_1}+\frac{\al_2+1}{ap_2}+\frac{\cA}{a}+L\right)}{\Gamma\left(\frac{\al_1+1}{ap_1}+\frac{\al_2+1}{ap_2}+\frac{\cA}{a}+L\right)\Gamma\left(\frac{\al_1+2}{ap_1}+\frac{\al_2}{ap_2}+\frac{\cA}{a}+L\right)},
\]
\[
\mu''_4
=\frac{\left(\frac{\al_1+1}{ap_1}+\frac{\al_2}{ap_2}+\frac{\cA}{a}+L\right)\left(\frac{\al_1+2}{ap_1}+\frac{\al_2+1}{ap_2}+\frac{\cA}{a}+L\right)}{\left(\frac{\al_1+1}{ap_1}+\frac{\al_2+1}{ap_2}+\frac{\cA}{a}+L\right)\left(\frac{\al_1+2}{ap_1}+\frac{\al_2}{ap_2}+\frac{\cA}{a}+L\right)},
\]
where
\[
\cA=\sum_{j=3}^m\frac{i_j+1}{p_j},
\]
\[
L=\sum_{k=1}^n\frac{\be_k+1}{bq_k}+\sum_{l=1}^o\frac{\gamma_l+1}{cr_l}+\cdots.
\]
By Lemma \ref{fact-gamma},
\[
\mu'\approx
\frac{\al_1^{\frac{1}{2p_1}}\al_2^{\frac{1}{2p_2}}(\al_1+\al_2+\cA)^{-\frac{1}{2}\left(\frac{1}{p_1}+\frac{1}{p_2}\right)\left(1-\frac{1}{a}\right)}}{(\al_1+\al_2+\cA+L)^{\frac{1}{2a}\left(\frac{1}{p_1}+\frac{1}{p_2}\right)}},
\]
\begin{multline*}
\mu''-1
\approx 
\frac{1}{\al_1+\al_2+\cA}
-\frac{a^{-2}}{\al_1+\al_2+\cA}
+\frac{a^{-2}}{\al_1+\al_2+\cA+L}
-\frac{a^{-2}}{(\al_1+\al_2+\cA+L)^2}\\
\approx\frac{a^{2}-1}{\al_1+\al_2+\cA}
+\frac{1}{\al_1+\al_2+\cA+L}.
\end{multline*}
Hence,
\[
\mu
\approx
\begin{cases}
\frac{\al_1^{\frac{1}{2p_1}}\al_2^{\frac{1}{2p_2}}(\al_1+\al_2+\cA)^{-\frac{1}{2}\left(\frac{1}{p_1}+\frac{1}{p_2}\right)\left(1-\frac{1}{a}\right)-1}\left|\al_1+\al_2+\cA-tL\right|}{(\al_1+\al_2+\cA+L)^{\frac{1}{2a}\left(\frac{1}{p_1}+\frac{1}{p_2}\right)+1}},&\text{if}\ a<1,\\
\al_1^{\frac{1}{2p_1}}\al_2^{\frac{1}{2p_2}}/(\al_1+\al_2+\cA+L)^{\frac{1}{2}\left(\frac{1}{p_1}+\frac{1}{p_2}\right)+1},&\text{if}\ a=1,\\
\frac{\al_1^{\frac{1}{2p_1}}\al_2^{\frac{1}{2p_2}}(\al_1+\al_2+\cA)^{-\frac{1}{2}\left(\frac{1}{p_1}+\frac{1}{p_2}\right)\left(1-\frac{1}{a}\right)-1}}{(\al_1+\al_2+\cA+L)^{\frac{1}{2a}\left(\frac{1}{p_1}+\frac{1}{p_2}\right)}},&\text{if}\ a>1,
\end{cases}
\]
where
\[
t:=a^{-2}-1>0.
\]
Therefore, $\sum\mu^p$ converges if and only if the following series converges: 
\[
\begin{cases}
\sum\frac{\al_1^{\frac{p}{2p_1}}\al_2^{\frac{p}{2p_2}}(\al_1+\al_2+\cA)^{-\frac{p}{2}\left(\frac{1}{p_1}+\frac{1}{p_2}\right)\left(1-\frac{1}{a}\right)-p}\left|\al_1+\al_2+\cA-tL\right|^p}{(\al_1+\al_2+\cA+L)^{\frac{p}{2a}\left(\frac{1}{p_1}+\frac{1}{p_2}\right)+p}},&\text{if}\ a<1,\\
\sum \al_1^{\frac{p}{2p_1}}\al_2^{\frac{p}{2p_2}}/(\al_1+\al_2+\cA+L)^{\frac{p}{2}\left(\frac{1}{p_1}+\frac{1}{p_2}\right)+p},&\text{if}\ a=1,\\
\sum\frac{\al_1^{\frac{p}{2p_1}}\al_2^{\frac{p}{2p_2}}(\al_1+\al_2+\cA)^{-\frac{p}{2}\left(\frac{1}{p_1}+\frac{1}{p_2}\right)\left(1-\frac{1}{a}\right)-p}}{(\al_1+\al_2+\cA+L)^{\frac{p}{2a}\left(\frac{1}{p_1}+\frac{1}{p_2}\right)}},&\text{if}\ a>1.
\end{cases}\label{condition2}
\]
Similar arguments as in the proof of Lemma \ref{fact-1}.(b) show that the convergence of this series is equivalent to the convergence of
\[
\begin{cases}
\sum_{i\in\bN^{4}}\frac{i_1^{\frac{p}{2p_1}}i_2^{\frac{p}{2p_2}}(i_1+i_2+i_3)^{-\frac{p}{2}\left(\frac{1}{p_1}+\frac{1}{p_2}\right)\left(1-\frac{1}{a}\right)-p}\left|i_1+i_2+i_3-i_4\right|^pi_3^{m-3}i_4^{m'-1}}{(i_1+i_2+i_3+i_4)^{\frac{p}{2a}\left(\frac{1}{p_1}+\frac{1}{p_2}\right)+p}},&\text{if}\ a<1,\\
\sum_{i\in\bN^{4}} i_1^{\frac{p}{2p_1}}i_2^{\frac{p}{2p_2}}i_3^{m-3}i_4^{m'-1}/(i_1+i_2+i_3+i_4)^{\frac{p}{2}\left(\frac{1}{p_1}+\frac{1}{p_2}\right)+p},&\text{if}\ a=1,\\
\sum_{i\in\bN^{4}}\frac{i_1^{\frac{p}{2p_1}}i_2^{\frac{p}{2p_2}}(i_1+i_2+i_3)^{-\frac{p}{2}\left(\frac{1}{p_1}+\frac{1}{p_2}\right)\left(1-\frac{1}{a}\right)-p}i_3^{m-3}i_4^{m'-1}}{(i_1+i_2+i_3+i_4)^{\frac{p}{2a}\left(\frac{1}{p_1}+\frac{1}{p_2}\right)}},&\text{if}\ a>1.
\end{cases}\label{condition2}
\]
According to Lemmas \ref{fact-1} and \ref{fact-3}.(c, d), this latter series converges exactly when 
\begin{equation}
p>
\begin{cases}
\max\left\{\dim\Om_2\right\},&\text{if}\ a= 1,\\
\max\left\{\dim\Om_2,\frac{2a(\dim\Om_2-m)}{p_1^{-1}+p_2^{-1}}\right\},&\text{if}\ a\neq 1.
\end{cases}\label{condition2}
\end{equation}

(d)
Formula (\ref{cross2}) shows that $\left[M_{z_1},M_{w_1}^*\right]$ is $p$-summable if and only if the infinite series $\sum\nu^p$ converges.
Since the region $\be_1=0$ has no effect on the summability of this series, we assume $\be_1>0$.
We can write
\[
\nu
=\nu'\left|\nu''-1\right|,
\]
\[
\nu'=\nu'_1\nu'_2\nu'_3,
\]
\[
\nu''=\nu''_1\nu''_2,
\]
which, by Lemma \ref{lemma2},
\[
\nu'_1
=\sqrt{\frac{\Gamma\left(\frac{\al_1+2}{p_1}\right)\Gamma\left(\frac{\be_1+1}{q_1}\right)}{\Gamma\left(\frac{\al_1+1}{p_1}\right)\Gamma\left(\frac{\be_1}{q_1}\right)}
\frac{\Gamma\left(\frac{\al_1+1}{p_1}+A\right)\Gamma\left(\frac{\be_1}{q_1}+B\right)}{\Gamma\left(\frac{\al_1+2}{p_1}+A\right)\Gamma\left(\frac{\be_1+1}{q_1}+B\right)}\frac{\Gamma\left(\frac{\al_1+2}{ap_1}+\frac{A}{a}\right)\Gamma\left(\frac{\be_1+1}{bq_1}+\frac{B}{b}\right)}{\Gamma\left(\frac{\al_1+1}{ap_1}+\frac{A}{a}\right)\Gamma\left(\frac{\be_1}{bq_1}+\frac{B}{b}\right)}},
\]
\[
\nu'_2
=\sqrt{\frac{\Gamma\left(\frac{\al_1+1}{ap_1}+\frac{\be_1+1}{bq_1}+\frac{A}{a}+\frac{B}{b}+\cL\right)\Gamma\left(\frac{\al_1+2}{ap_1}+\frac{\be_1}{bq_1}+\frac{A}{a}+\frac{B}{b}+\cL\right)}{\Gamma\left(\frac{\al_1+2}{ap_1}+\frac{\be_1+1}{bq_1}+\frac{A}{a}+\frac{B}{b}+\cL\right)^2}},
\]
\[
\nu'_3
=\sqrt{\frac{\left(\frac{\al_1+1}{ap_1}+\frac{\be_1+1}{bq_1}+\frac{A}{a}+\frac{B}{b}+\cL\right)\left(\frac{\al_1+2}{ap_1}+\frac{\be_1}{bq_1}+\frac{A}{a}+\frac{B}{b}+\cL\right)}{\left(\frac{\al_1+2}{ap_1}+\frac{\be_1+1}{bq_1}+\frac{A}{a}+\frac{B}{b}+\cL\right)^2}},
\]
\[
\nu''_1
=\frac{\Gamma\left(\frac{\al_1+1}{ap_1}+\frac{\be_1}{bq_1}+\frac{A}{a}+\frac{B}{b}+\cL\right)\Gamma\left(\frac{\al_1+2}{ap_1}+\frac{\be_1+1}{bq_1}+\frac{A}{a}+\frac{B}{b}+\cL\right)}{\Gamma\left(\frac{\al_1+1}{ap_1}+\frac{\be_1+1}{bq_1}+\frac{A}{a}+\frac{B}{b}+\cL\right)\Gamma\left(\frac{\al_1+2}{ap_1}+\frac{\be_1}{bq_1}+\frac{A}{a}+\frac{B}{b}+\cL\right)},
\]
\[
\nu''_2
=\frac{\left(\frac{\al_1+1}{ap_1}+\frac{\be_1}{bq_1}+\frac{A}{a}+\frac{B}{b}+\cL\right)\left(\frac{\al_1+2}{ap_1}+\frac{\be_1+1}{bq_1}+\frac{A}{a}+\frac{B}{b}+\cL\right)}{\left(\frac{\al_1+1}{ap_1}+\frac{\be_1+1}{bq_1}+\frac{A}{a}+\frac{B}{b}+\cL\right)\left(\frac{\al_1+2}{ap_1}+\frac{\be_1}{bq_1}+\frac{A}{a}+\frac{B}{b}+\cL\right)},
\]
where
\[
A=\sum_{j=2}^m\frac{\al_j+1}{p_j},
\]
\[
B=\sum_{k=2}^n\frac{\be_k+1}{q_k},
\]
\[
\cL=\sum_{l=1}^o\frac{\gamma_l+1}{cr_l}+\cdots.
\]
By Lemma \ref{fact-gamma},
\[
\nu'\approx
\al_1^{\frac{1}{2p_1}}
\be_1^{\frac{1}{2q_1}}
(\al_1+A)^{-\frac{1}{2p_1}\left(1-\frac{1}{a}\right)}
(\be_1+B)^{-\frac{1}{2q_1}\left(1-\frac{1}{b}\right)}/(\al_1+\be_1+A+B+\cL)^{\frac{1}{2}\left(\frac{1}{ap_1}+\frac{1}{bq_1}\right)},
\]
\[
\nu''-1
\approx 
\frac{1}{\al_1+\be_1+A+B+\cL}
-\frac{1}{(\al_1+\be_1+A+B+\cL)^2}
\approx
\frac{1}{\al_1+\be_1+A+B+\cL},
\]
so
\[
\nu\approx
\al_1^{\frac{1}{2p_1}}
\be_1^{\frac{1}{2q_1}}
(\al_1+A)^{-\frac{1}{2p_1}\left(1-\frac{1}{a}\right)}
(\be_1+B)^{-\frac{1}{2q_1}\left(1-\frac{1}{b}\right)}/(\al_1+\be_1+A+B+\cL)^{\frac{1}{2}\left(\frac{1}{ap_1}+\frac{1}{bq_1}\right)+1}.
\]
Therefore, $\sum\nu^p$ converges if and only if the following series converges:
\[
\sum
 \frac{\al_1^{\frac{p}{2p_1}}
 \be_1^{\frac{p}{2q_1}}
	(\al_1+A)^{-\frac{p}{2p_1}\left(1-\frac{1}{a}\right)}
	(\be_1+B)^{-\frac{p}{2q_1}\left(1-\frac{1}{b}\right)}}{(\al_1+\be_1+A+B+\cL)^{\frac{p}{2}\left(\frac{1}{ap_1}+\frac{1}{bq_1}\right)+p}}.
\]
Similar arguments as in the proof of Lemma \ref{fact-1}.(b) show that the convergence of this series is equivalent to 
\[
\sum_{i\in\bN^{5}}
\frac{i_1^{\frac{p}{2p_1}}i_3^{\frac{p}{2q_1}}(i_1+i_2)^{-\frac{p}{2p_1}\left(1-\frac{1}{a}\right)}(i_3+i_4)^{-\frac{p}{2q_1}\left(1-\frac{1}{b}\right)}i_2^{m-2}i_4^{n-2}i_5^{m'-1}}{(i_1+i_2+i_3+i_4+i_5)^{\frac{p}{2}\left(\frac{1}{ap_1}+\frac{1}{bq_1}\right)+p}}.
\]
According to Lemma \ref{fact-3}.(f), this series converges exactly when $p>\dim\Om_2$.
\end{proof}

%According to Lemma \ref{fact}, this happens exactly when
%\begin{equation}
%p>\dim\Om.\label{condition3}
%\end{equation}
%
%Theorem \ref{theorem1} is proved by putting three conditions (\ref{condition1}), (\ref{condition2}) and (\ref{condition3}) together.
%
%\end{proof}

\noindent
\textbf{Acknowledgments.}
The author would like to thank Richard Rochberg and Xiang Tang for reading a preliminary version of this paper and making helpful suggestions.

\noindent
\textbf{Data availability.}
Data sharing not applicable to this article as no datasets were generated or analysed during the current study.

%\section*{Acknowledgments}
%\noindent
%The author would like to thank Richard Rochberg and Xiang Tang for reading a preliminary version of this paper and making helpful suggestions.

\end{document}